\titlespacing{\section}{0pt}{8pt}{4pt}
\titlespacing{\subsection}{0pt}{6pt}{2pt}
\def\be{\begin{equation}}
\def\ee{\end{equation}}
\def\bse{\begin{subequations}}
\def\ese{\end{subequations}}
\def\bge{\begin{eqnarray}}
\def\bgee{\begin{eqnarray*}}
\def\ege{\end{eqnarray}}
\def\egee{\end{eqnarray*}}
\DeclareMathOperator*{\esssup}{ess\,sup}
\newtheorem{definition}{Definiton}
\newtheorem{theorem}[definition]{Theorem}
\newtheorem{lemma}[definition]{Lemma}
\newtheorem{remark}[definition]{Remark}
\newcommand{\e}{\varepsilon}
\newcommand{\N}{\mathbb{N}}
\newcommand{\R}{\mathbb{R}}
\newcommand{\dive}{\operatorname{div}}
\newcommand{\di}[1]{\,\mathrm{d}#1}
\crefname{in}{inequality}{inequalities}
\crefname{assumption}{Assumption}{assumptions}
\newtcolorbox{mybox}[1]{%
    tikznode boxed title,
    enhanced,
    arc=0mm,
    interior style={white},
    attach boxed title to top left= {yshift=-\tcboxedtitleheight/2-0.05cm, xshift=0.7cm},
    fonttitle=\small\bfseries,
    colbacktitle=white,coltitle=black,
    boxed title style={size=small,colframe=white,boxrule=0pt},
    title={#1}}
\newcommand{\tline}{%
    \noalign {\ifnum 0=`}\fi \hrule height 1pt
    \futurelet \reserved@a \@xhline
}
\newcolumntype{"}{@{\hskip\tabcolsep\vrule width 1pt\hskip\tabcolsep}}
\begin{document}
\title{A multiscale quasilinear system for colloids deposition in porous media: Weak solvability and numerical simulation of a near-clogging scenario}

\author[$\dagger$]{Michael Eden}
\author[$\star$]{Christos Nikolopoulos}
\author[$\ddag$]{Adrian Muntean}

\affil[$\dagger$]{Zentrum f\"ur Technomathematik, Department of Mathematics and Computer Science, University of Bremen, Germany}
\affil[$\star$]{Department of Mathematics, School of Sciences, University of The Aegean, Greece}
\affil[$\ddag$]{Department of Mathematics and Computer Science, Karlstad University, Sweden}

\maketitle
\begin{abstract}
 We study the weak solvability of a quasilinear reaction-diffusion system nonlinearly coupled with an linear elliptic system posed in a domain with distributed microscopic balls in $2D$.
 The size of these balls are governed by an ODE with direct feedback on the overall problem. The system describes the diffusion, aggregation, fragmentation, and deposition of populations of colloidal particles of various sizes inside a porous media made of prescribed  arrangement of balls.
 The mathematical analysis of the problem relies on a suitable application of Schauder's fixed point theorem which also provides a convergent algorithm for an iteration method to compute finite difference approximations of smooth solutions to our multiscale model.
 Numerical simulations illustrate the behavior of the local concentration of the colloidal populations close to clogging situations.
\end{abstract}

{\bf Keywords:}  Colloidal transport and deposition, reactive porous media, weak solutions to strongly nonlinear parabolic systems, two-scale  finite difference approximation, clogging

{\bf MSC2020:} 35K61, 65N06, 35B27, 76S05, 80M40


\section{Introduction and problem statement}

We study a two-scale system modeling the effective diffusive transport as well as the aggregation, fragmentation, and deposition of populations of colloidal particles inside porous media.
Such situations arise, for instance, in membrane filtration scenarios \cite{Fasano,Bruna_JFM}, papermaking \cite{Asa},  immobilization of colloids in soils \cite{Chen}, or transport of colloidal contaminants in groundwater \cite{Suciu}.

We are particularly interested in situations where micro-structural changes due to the deposition or dissolution of colloids are allowed to take place.
This can locally change both the transport patterns and storage capacity of the medium; see \cite{Icardi,King,Hallak,Maes,Knabner,Noorden} for related cases. This variety of technological and natural processes is based on the transfer of colloidal particles from liquid suspension onto stationary surfaces \cite{johnson1995dynamics}.
From this perspective, one can perceive that the porous media we are considering here behave like materials with reactive internal microstructures (see \cite{Diaz} for a periodic setting) and, based on \cite{Showalter_Oberwolfach}, they are sometimes classified as media with distributed microstructures.
Additional motivation for this work comes  from our own research on reactive flow in porous media and is linked very much with the work of P. Ortoleva and J. Chadam (see e.g. \cite{Chadam} and follow up papers), but it is worth mentioning that quite related  aspects arise in pharmacy and medicine like drug delivery, thrombosis formation on arterial walls, evolution of Alzheimer's disease.
We refer the reader, for instance, to  \cite{Giulia,Thrombosis,Silvia} for works in this direction.

Denoting with $u=(u_1,...,u_N)$ ($i=1,...,N$) the molar concentrations of colloids of size $i$ (with $N\in \N$ the maximal size), its time evolution can be modelled by a quasi-linear parabolic system in the form of

\begin{align}\label{abstract_quasi}
\partial_tu_i-\dive(D_i(u)\nabla u_i)=F_i(u),
\end{align}
where $F_i(u)$ accounts for the aggregation, segregation, and adsorption processes and $D_i(u_i)$ the changing permeability as consequence of the micro-structural changes (like clogging) inside the porous medium itself.
While \cref{abstract_quasi} is purely macroscopic, the computation of the effective permeability $D_i(u_i)$ is done on the micro-scale therefore leading to the two-scale nature of our problem.
This system is a compact and abstract reformulation of a two-scale model for colloidal transport derived in \cite{MC20} via asymptotic homogenization (more details are given in \Cref{strategy}).
Structurally similar (two-scale model with geometrical changes) models were investigated in, e.g., \cite{Eden19,Peter09}.

In this work, we take a $2D$-cross-section of a porous medium and assume the solid matrix of the cross section to be made up of circles of not-necessarily uniform radius.
The growth and shrinkage of these circles, which represent the underlying micro-structural changes of the porous medium, are modelled via a scalar quantity governed by an additional ODE.
For a similar geometrical setup see, e.g., \cite{Peter09}.
The model and the resulting mathematical problem gets more complicated if we were to allow for more general geometries (e.g., evolving $C^2$-interfaces) that can not be represented by a scalar quantity like the radius in our setting.
We treat our geometries in $2D$ mainly for the sake of simplicity of inequalities and transformations and also because the simulation work is easier to be handled in $2D$ compared to $3D$, there is no fundamental element in the analysis that is sensitive to dimensions (like Sobolev embeddings would be for example).
As a consequence, the mathematical analysis part can be extended to $3D$ with suitable  modifications on the upper and lower {\em a priori} bounds on the radii of the balls-like microstructure.

The quasilinear structure of the problem together with the multiscale coupling is non-standard.
Here, we point out that $D_i$ and $F_i$ are non-linear operators that are not defined via point wise evaluation (in the sense of $D_i(u)(t,x)=D_i(t,x,u(t,x))$).
In particular, it does not fit directly to the framework elaborated in, e.g., \cite{Alt} and it requires an approach that utilizes the underlying coupling present in the model equations behind the abstract system.
A similar two-scale problem allowing for micro-structural changes was investigated in \cite{Meier09}.

In \Cref{strategy} we explain our working strategy to prove the existence of weak solutions to the overall problem.
To keep things simple, we consider that the local porosity $\phi(r)$ does not degenerate.
Note however, that it is technically possible to include in the analysis simple degeneracies (like neighboring microstructures touching in single points \cite{Schulz}), a complete (local) clogging being however out of reach. 
Besides the non-degeneracy of the effective parameter, another simplification is included  -- the absence of the flow.
Note that if the colloidal populations would be immersed in a fluid flow, then, most likely, besides the balance equations of the linear momentum one would also have to take into account the charge transport taking place between oppositely charged populations of particles; see e.g. \cite{Robin,Ray} for more information in this direction.     

The paper is organized as follows: In \Cref{strategy} we present the model and outline our strategy for the analysis of our problem.
We list the needed mathematical details of the problem so that we can prove in \Cref{existence} the existence of a weak solution.
In \Cref{numerics},  we solve numerically our multiscale quasilinear problem and discuss the obtained numerical results for realistic parameter regimes.
We add in \Cref{discussion} a detailed discussion of the potential of our problem, expected results, and related aspects. 

\section{Problem statement and solution strategy}\label{strategy}
In the following, let $S=(0,T)$ be the time interval of interest and $\Omega\subset\R^2$ a bounded Lipschitz domain.
In addition, let $N\in\N$ be a given number indicating the maximal possible \emph{size} of an aggregate of colloid particle, where \emph{size} refers to the number of primary particles making up the aggregate.
For each $i=\{1,...,N\}$, let $u_i\colon S\times\Omega\to[0,\infty)$ (we set $u=(u_1,...,u_N)$) denote the molar concentration density of aggregates of size $i$ at point $x\in\Omega$ at time $t\in S$.
We take the function $v\colon S\times\Omega\to[0,\infty)$ to represent the mass density of absorbed material (mass that is in the system but currently not part of the diffusion and agglomeration process); this mass can be dissolved again by a  Robin-type exchange allowing colloidal populations to re-enter the pore space. 
This process of absorption and dissolution is modelled in this context  via an Robin-type exchange term (see e.g. \cite{Krehel}) in the form of

$$
\frac{2\pi r}{1-\pi r^2}(a_iu_i-\beta_iv).
$$
Here, the radius function $r\colon S\times\Omega\to(0,r_{max})$ (for some $r_{max}>0$)  acts as a measure of the \emph{clogginess} of the porous media and $\frac{2\pi r}{1-\pi r^2}$ is the ratio of the size of the boundary between fluid space and pore to the pore volume.

To describe  the aggregation and fragmentation processes taking place inside the pore space of the medium, we use the \emph{Smoluchowski} formulation (we point to \cite{Aldous} for a review) given here by

%
$$
R_i(u)=\frac{1}{2}\sum_{j+l=i}\gamma_{jl}u_ju_l-u_i\sum_{j=1}^{N-i}\gamma_{jl}u_j.
$$
It is important to note that in the context of porous media the colloidal populations involve a finite size chain of the cluster, i.e. there will be a population of $N$-mers where $N$ takes the maximum cluster size.
As a result, we deal with a finite sum here.
Interestingly, for many applications a good choice of such $N$ is rather low; see e.g.  \cite{Krehel}.

The diffusion-reaction system for the different aggregates is then given via
\begin{subequations}
\begin{alignat}{2}
\partial_tu_i-\dive(D_i(r)\nabla u_i)&=R_i(u)-\frac{2\pi r}{1-\pi r^2}(a_iu_i-\beta_iv)&\quad&\text{in}\ \ S\times\Omega,\label{overall-1}\\
-D_i(r)\nabla u_i\cdot n&=0&\quad&\text{on}\ \ S\times\partial\Omega,\label{overall-2}\\
u_i(0)&=u_{i0}&\quad&\text{in}\ \ \Omega.\label{overall-3}
\end{alignat}

The effective diffusion matrix (including diffusion, dispersion, and tortuosity effects) $D_i(r)\in\R^{2\times2}$ can be calculated using any solution $w_{k}$, $k=1,2$, of the cell problem
\begin{alignat}{2}
-\Delta w_{k}&=0&\quad&\text{in}\ \ S\times(Y\setminus \overline{B}(r)),\label{overall-4}\\
-\nabla w_{k}\cdot n&=e_k\cdot n&\quad&\text{on}\ \ S\times\partial B(r),\label{overall-5}\\
y&\mapsto w(\cdot,\cdot,y)&\quad&\text{is $Y$-periodic}\label{overall-6}.
\end{alignat}
Here, $Y=(0,1)^2$ denotes the unit cell, $\overline{B}(r)$ is the closed ball with radius $r$ and center point $a=(\nicefrac{1}{2},\nicefrac{1}{2})$, and $e_k$ the $k$-th unit normal vector. 
We have ($d_i>0$ are known constants)
$$
(D_i)_{jk}=d_i\phi(r)\int_{Y\setminus \overline{B}(r)}(\nabla w_{k}+e_{k})\cdot e_j\di{z}
$$
where $\phi(r)=\frac{1-\pi r^2}{|\Omega|}$ denotes the porosity density of the medium.
For more details regarding the cell problem and the effective diffusivity, we refer to \cite{MC20} where they are established via homogenization.

Finally, the evolution of $v$ is governed by an ODE parametrized in $x\in\Omega$
\begin{alignat}{2}
\partial_tv&=\sum_{i=1}^N\left(\alpha_iu_i-\beta_iv\right)&\quad&\text{in}\ \ S\times\Omega,\label{overall-7}\\
v(0)&=v_0&\quad&\text{in}\ \ \Omega.\label{overall-8}
\end{alignat}
and the radius function is governed by the following ODE parametrized in $x\in\Omega$
\begin{alignat}{2}
\partial_tr&=2\pi\alpha\sum_{i=1}^N\left(a_iu_i-\beta_iv)\right)&\quad&\text{in}\ \ S\times\Omega,\label{overall-9}\\
r(0)&=r_0&\quad&\text{in}\ \ \Omega.\label{overall-10}
\end{alignat}
\end{subequations}
A possible initial choice for the radii $r_0$ is depicted in Figure \ref{Rdunit}. We point out there also what will happen at the final time $T$; more details on the parameter setup are given in the simulation sections.   What concerns the modeling of the deposition of the colloidal populations, our choice is similar to one reported in \cite{johnson1995dynamics}. 

\begin{figure}[h!]
\centering
\includegraphics[trim={5cm 10cm 4.5cm 10cm},clip,scale=.68]{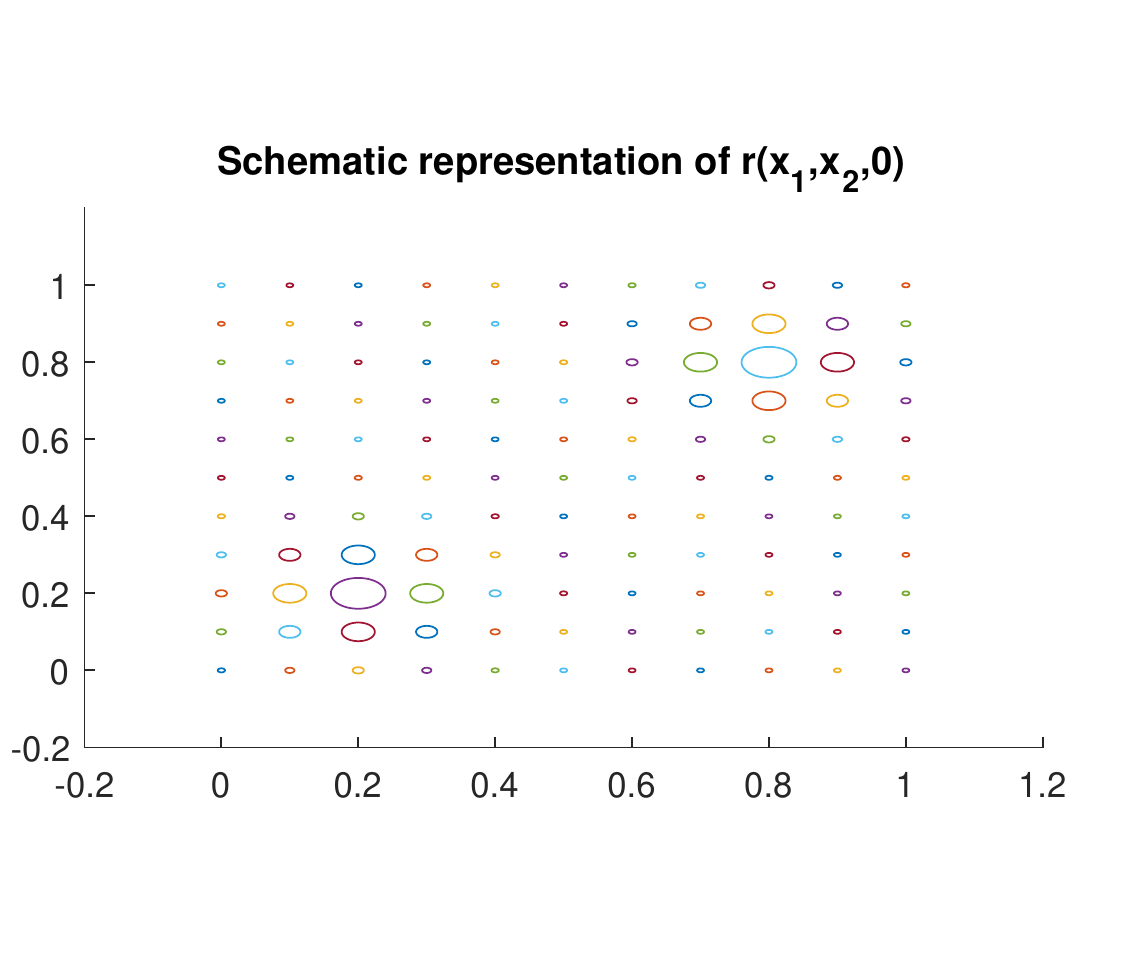}
\includegraphics[trim={5cm 10cm 4.5cm 10cm},clip,scale=.68]{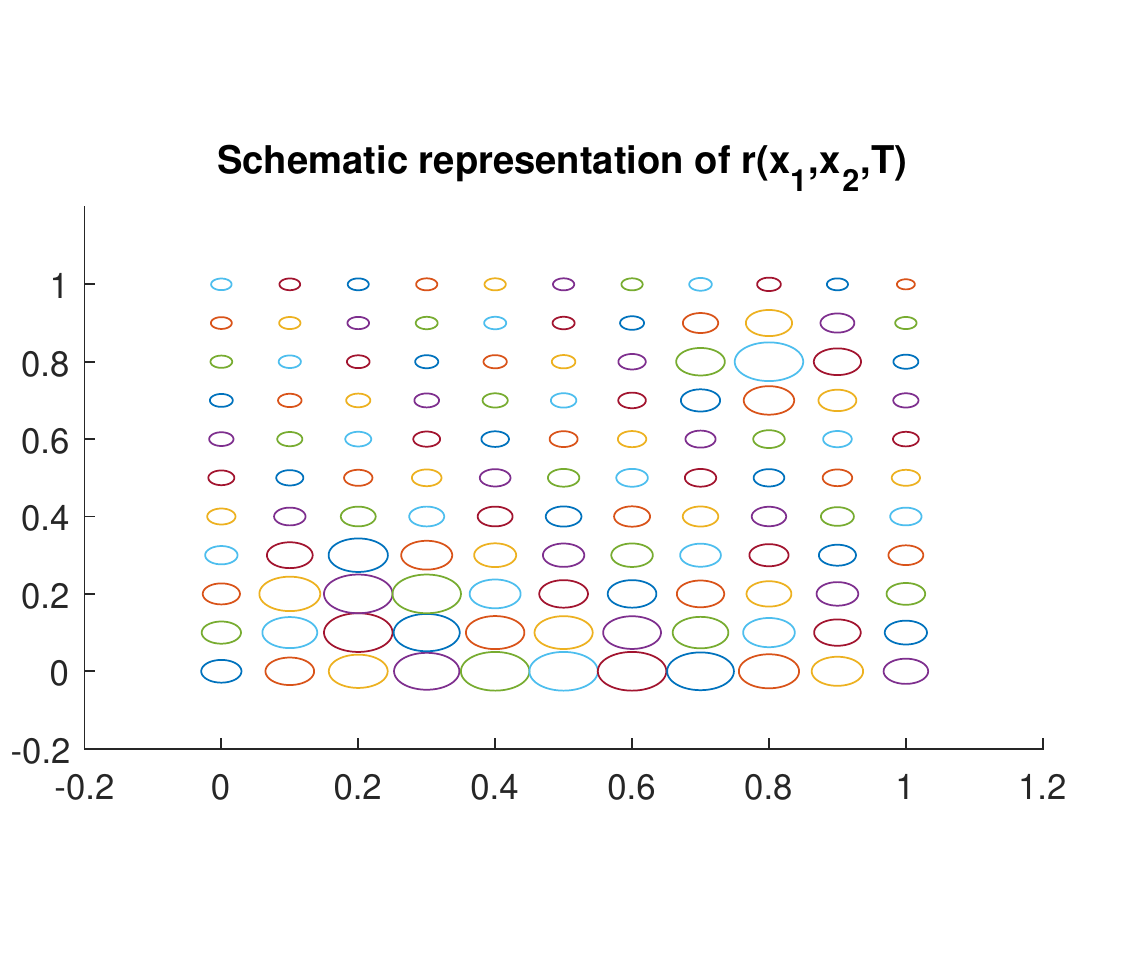}
 \caption{Example of $r(x_1,x_2,t=0)$ with corresponding $r(x_1,x_2,t=T)$ of the same simulation.
 The parameter setting is as discussed in Figure \ref{Figex2D}.
 Regions with larger circles correspond to low porosity and permeability.} 
 \label{Rdunit}
\end{figure} 
This accounts for the simple observation that the absorbed material leads to the clogging of the pore under the fundamental assumption of the growth of the radius is proportional to the amount of material that is absorbed.
For a more concrete argumentation for this particular structure, we again point to \cite{MC20}.

The overall problem we are considering in this work is then given by \cref{overall-1,overall-2,overall-3,overall-4,overall-5,overall-6,overall-7,overall-8,overall-9,overall-10}.
Regarding our concept of a weak solution of this system:

\begin{definition}[Weak solution]\label{weaksol}
For a time interval $(0,s)\subset S$, a weak solution to the problem is given by a set of functions $(u,v,w,r)$ with the regularity
\begin{align*}
u_i&\in L^2((0,s);H^1(\Omega))\cap L^\infty((0,s)\times\Omega)\quad\text{such that}\ \ \partial_tu_i\in L^2((0,s)\times\Omega),\\
w&\in L^2((0,s)\times\Omega;H^1_\#(Y)),\quad
v\in W^{1,1}((0,s);L^2(\Omega)),\quad r\in W^{1,1}((0,s);L^2(\Omega))
\end{align*}
that satisfies \cref{overall-1,overall-2,overall-3,overall-4,overall-5,overall-6,overall-7,overall-8,overall-9,overall-10} in the standard weak Sobolev setting.
\end{definition}

\paragraph{Solution strategy.} Without yet caring  about regularity issues (like smoothness, integrability, measurability) and possible singularities, we want to suggest our solution strategy for the problem given by \cref{overall-1,overall-2,overall-3,overall-4,overall-5,overall-6,overall-7,overall-8,overall-9,overall-10} and show how it relates to the abstract quasi-linear PDE System \ref{abstract_quasi}.

We start with a few comments regarding the particular structure of our problem where we refer to the subproblems $(i)$-$(iv)$ for $u, w, v, r$, viz.

\begin{enumerate}
	\item[(A)] The problem is strongly coupled: $(i)$ depends on $u, w, v, r$, $(ii)$ on $w, r$, $(iii)$ on $u, v$, and $(iv)$ on $r, u, v$.
	\item[(B)] Problem $(i)$ is parabolic in $u$, $(ii)$ elliptic in $w$, $(iii)$ and $(iv)$ are first order ODEs in $v$ and $r$.
	\item[(C)] Problem $(i)$ is nonlinear in $u$ and $r$, $(ii)$ is nonlinear in $r$, and $(iii)$ and $(iv)$ are linear.
	\item[(D)] Problem $(ii)$ is not a \emph{real} free boundary problem, as the underlying domain $Y\setminus\overline{B(r)}$ depends only on $(t,x)$ while the derivatives are w.r.t.~$y$.
\end{enumerate}
As a consequence of points (A)--(D), a natural strategy is to first tackle the ODEs and to use them to inform the cell problem and the parabolic system.
In the following, we outline the intermediate steps involved in getting to the abstract fixed-point problem that will be the starting point for our analysis in \Cref{existence}:

Step $(a)$: Looking at the linear ODE vor $v$ (given by \cref{overall-7,overall-8}),
				we find the characterization of $v$ in terms of $u$ via (setting $b=\sum_{i=1}^N\beta_i$)
				$$
				v(t,x)=e^{-bt}\left(v_0(x)+\sum_{i=1}^N\alpha_i\int_0^te^{b\tau}u_i(\tau,x)\di{\tau}\right).
				$$
				With this in mind, we can eliminate $v$ for $u$ in our problem by setting $v=\mathcal{L}_v(u)$, where $\mathcal{L}_v$ is the abstract solution operator for the $v$-problem.

Step $(b)$: Similarly, looking at the second ODE (problem $(iv)$), we have
    $$
    r(t,x)=r_0(x)+2\pi\alpha\sum_{i=1}^N\int_0^t(a_iu_i(\tau,x)-\beta_iv(\tau,x))\di{\tau}
    $$
		With this characterization, we can introduce the corresponding solution operator $\widetilde{\mathcal{L}_{r}}$ via
		$$
		r=\mathcal{L}_{r}(u,v)=\mathcal{L}_{r}(u,\mathcal{L}_{v}(u))=\widetilde{\mathcal{L}_{r}}(u).
		$$

Step $(c)$: Looking at the cell problem $(k=1,2)$
		\begin{alignat*}{2}
		-\Delta w_{k}&=0&\quad&\text{in}\ \ S\times(Y\setminus \overline{B}(r)),\\
		-\nabla w_{k}\cdot n&=e_k\cdot n&\quad&\text{on}\ \ S\times\partial B(r),\\
		y&\mapsto \tau(\cdot,\cdot,y)&\quad&\text{is $Y$-periodic},
		\end{alignat*}
		we expect to get solutions for every given $r>0$ such that $\overline{B}(r)\cap\partial Y=\emptyset$.
		We introduce the corresponding solution operator via
		$$w=\mathcal{L}_{w}(r)=\left(\mathcal{L}_{w}\circ \widetilde{\mathcal{L}_{v}}\right)(u)=\widetilde{\mathcal{L}_{w}}(u).$$
	
Step $(d)$: Putting everything together, we can rewrite the parabolic problem
		$$
		\partial_tu_i-\dive(D_i(r,w)\nabla u_i)=R_i(u)-\frac{2\pi r}{1-\pi r^2}(a_iu_i-\beta_iv)
		$$
		into
		$$
		\partial_tu_i-\dive\left(D_i\left(\widetilde{\mathcal{L}_{r}}(u),\widetilde{\mathcal{L}_{w}}(u)\right)\nabla u_i\right)=R_i(u)-\frac{2\pi\widetilde{\mathcal{L}_{r}}(u)}{1-\pi (\widetilde{\mathcal{L}_{r}}(u))^2}(a_iu_i-\beta_i\mathcal{L}_{v}(u))
		$$
		This highly nonlinear system of PDEs is now given only in terms of the unknown function $u$.
		On an abstract level, we therefore want to investigate parabolic system like
		\begin{subequations}
    		\begin{alignat}{2}
        		\partial_tu_i-\dive\left(\widehat{D_i}(u)\nabla u_i\right)&=F_i(u)&\quad&\text{in}\ \ S\times\Omega,\label{nonlinear-1}\\
        		-\widehat{D_i}(u)\nabla u_i\cdot n&=0&\quad&\text{on}\ \ S\times\partial\Omega,\label{nonlinear-2}\\
        		u_i(0)&=u_{i0}&\quad&\text{in}\ \ \Omega\label{nonlinear-3}
    		\end{alignat}
		\end{subequations}
		where
		$$
		F_i(u)=R_i(u)-\frac{2\pi\widetilde{\mathcal{L}_{r}}(u)}{1-\pi (\widetilde{\mathcal{L}_{r}}(u))^2}(a_iu_i-\beta_i\mathcal{L}_{v}(u)).
		$$
The exact setting regarding function spaces will be settled in the following section.


\section{Analysis}\label{existence}
In this section, we present the detailed fixed-point argument (as outlined in Section \ref{strategy}) for the non-linear problem given via \cref{nonlinear-1,nonlinear-2,nonlinear-3}:






The strategy of our proof is a three-step process:

\begin{enumerate}
    \item[1)] For a given function $\tilde{u}$ (of sufficient regularity), we establish well-posedness  and estimates for the  linear problem given by 
    \begin{subequations}
    \begin{alignat}{2}
    	\partial_tu_i-\dive\left(\widehat{D_i}(\tilde{u})\nabla u_i\right)&=F_i(\tilde{u})&\quad&\text{in}\ \ S\times\Omega,\label{linearized-1}\\
    	-\widehat{D_i}(\tilde{u})\nabla u_i\cdot n&=0&\quad&\text{on}\label{linearized-2}\ \ S\times\partial\Omega,\\
    	u_i(0)&=u_{i0}&\quad&\text{in}\ \ \Omega\label{linearized-3}.
    \end{alignat}
    \end{subequations}
    This is established in \Cref{existence_linear}.
    \item[2)] We show that there is a set such that the solution operator for \cref{linearized-1,linearized-2,linearized-3} maps that set onto itself, see \Cref{lemma_fixed}.
    This result is local in time, since we need to keep $t$ small in order to control the norm of the solution.
    \item[3)] Finally, we employ Schauder's fixed point theorem to establish the existence of at least one solution, see \Cref{existence}.
\end{enumerate}

For some arbitrary (later to be fixed) $M>0$ and $s\in(0,T)$, let

$$
T_{s,M}=\{u\in L^2((0,s)\times\Omega)^N\ : \ \|u_i\|_\infty\leq M \ (i=1,...,N)\}.
$$
For ease of notation, for any given $u$ of sufficient regularity we will write $v_u=\mathcal{L}_v(u)$, $r_u=\widetilde{\mathcal{L}_r}(u)$, $w_u=\widetilde{\mathcal{L}_w}(u)$ for the corresponding solution given for the particular subproblem and $Y_u=Y\setminus\overline{B(r_u)}$.

\subsection{Auxiliary results}
We start by collecting some important auxiliary results and estimates that will be needed in the construction of the actual fixed-point argument.

\begin{table}[h]
\centering
\begin{tabular}{l"l"l"}
Function						& Assumption 										& Reason	 \\ \tline
$r_0$								& $\nicefrac{1}{8}\leq r_0(x)\leq\nicefrac{1}{4}$	& Room for growth and shrinkage\\
$u_{i0}$						& $0\leq u_{i0}(x)\leq\nicefrac{M}{2}$				& Keeping the solution in $T_{s,M}$    \\ 
$v_0$								& $0\leq v_0(x)\leq C_v$	& Bounding $v_u$
\end{tabular}
\caption{Assumptions regarding the initial data.}
\end{table}

In a first step, we establish some sufficient conditions for the diffusivity matrix to not degenerate.
Note that at this point it is not clear that this condition can be satisfied; this is shown in \Cref{lemma_boundsr}.
\begin{lemma}[Diffusivity]\label{lemma_diffus}
If $u\in L^2((0,s)\times\Omega)$ is chosen such that $0\leq2r_u\leq 1-\e_1$ for some small $\e_1>0$, we find that $\widehat{D_i}(u)$ is symmetric and positive definite, i.e., $\widehat{D_i}(u)\xi\cdot\xi\geq c_i|\xi|^2$ where the constants $c_i>0$ do not depend on $u$ and $\xi\in\R^2$.
In addition, $\widehat{D_i}(u)\in L^\infty((0,s)\times\Omega)$ .
\end{lemma}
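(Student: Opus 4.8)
The plan is to exploit the variational structure of the cell problem \cref{overall-4,overall-5,overall-6} rather than to manipulate the entries of $\widehat{D_i}(u)$ directly. Writing $w_\xi=\sum_{k}\xi_kw_k$ for the corrector associated with $\xi\in\R^2$, I would first extract the orthogonality relation
$$
\int_{Y_u}(\nabla w_k+e_k)\cdot\nabla\psi\di{z}=0\qquad\text{for all $Y$-periodic }\psi\in H^1_\#(Y_u).
$$
This follows by testing the cell problem with $\psi$, cancelling the contributions on $\partial Y$ by periodicity, and rewriting the Neumann datum on $\partial B(r_u)$ through the divergence theorem applied to the harmonic field $z\mapsto e_k\cdot z$ (whose boundary term on $\partial Y$ again cancels by periodicity). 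Taking $\psi=w_j$ then recasts the definition of the effective tensor into the symmetric form
$$
(\widehat{D_i}(u))_{jk}=d_i\phi(r_u)\int_{Y_u}(\nabla w_k+e_k)\cdot(\nabla w_j+e_j)\di{z},
$$
so that symmetry is immediate and $\widehat{D_i}(u)\xi\cdot\xi=d_i\phi(r_u)\int_{Y_u}|\nabla w_\xi+\xi|^2\di{z}\ge0$.

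To turn nonnegativity into coercivity I would argue pointwise in $(t,x)$, i.e.\ for a fixed admissible radius $r\in[0,\tfrac{1-\e_1}{2}]$. If $\widehat{D_i}(u)\xi\cdot\xi=0$, then $\nabla w_\xi=-\xi$ a.e.\ in $Y_u$, hence $w_\xi(z)=-\xi\cdot z+\text{const}$ on the perforated cell. Since $2r_u\le1-\e_1<1$ keeps $\overline{B(r_u)}$ strictly inside $Y$, the periodic fluid phase is connected and winds around the torus; integrating $\nabla w_\xi$ along a loop with unit winding in direction $e_k$ gives $-\xi_k=0$, forcing $\xi=0$. Thus $\widehat{D_i}(u)$ is positive definite for each admissible radius. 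The main obstacle is to make the coercivity constant \emph{uniform} in $u$: distinct $u$ produce distinct fields $r_u(t,x)$, so I need one lower bound valid for all $r\in[0,\tfrac{1-\e_1}{2}]$ at once. I would obtain this by a continuity–compactness argument. Pulling the cell problem back to a fixed reference geometry $Y\setminus\overline{B(\tfrac14)}$ via a radial diffeomorphism turns the dependence on $r$ into a smooth, uniformly elliptic coefficient dependence on a fixed domain, so that $r\mapsto\widehat{D_i}(r)$ is continuous on the compact interval $[0,\tfrac{1-\e_1}{2}]$ (the unperforated endpoint $r=0$ giving simply $\widehat{D_i}=d_i|\Omega|^{-1}\,\mathrm{Id}$). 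Its smallest eigenvalue is then a continuous, strictly positive function and hence bounded below by some $\lambda_{\min}>0$; combined with the uniform porosity bound $\phi(r)\ge(1-\pi/4)/|\Omega|=:\phi_{\min}>0$, this yields $c_i:=d_i\phi_{\min}\lambda_{\min}>0$ independent of $u$.

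For the upper bound I would use the self-improving estimate obtained by testing with $\psi=w_k$: from $\|\nabla w_k+e_k\|_{L^2(Y_u)}^2=\int_{Y_u}(\nabla w_k+e_k)\cdot e_k\di{z}\le\|\nabla w_k+e_k\|_{L^2(Y_u)}\,|Y_u|^{1/2}$ one gets $\|\nabla w_k+e_k\|_{L^2(Y_u)}\le|Y_u|^{1/2}\le1$. A Cauchy–Schwarz estimate then bounds each entry by $|(\widehat{D_i}(u))_{jk}|\le d_i\phi(r_u)|Y_u|\le d_i/|\Omega|$, uniformly in $(t,x)$ and in $u$. Finally, measurability of $(t,x)\mapsto\widehat{D_i}(u)(t,x)$ follows by composing the continuous map $r\mapsto\widehat{D_i}(r)$ with the measurable radius field $r_u$, so that $\widehat{D_i}(u)\in L^\infty((0,s)\times\Omega)$, completing the proof.
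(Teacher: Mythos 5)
Your argument is correct and shares its skeleton with the paper's: both pass to the symmetric representation $(\widehat{D_i}(u))_{jk}=d_i\phi(r_u)\int_{Y_u}(\nabla w_k+e_k)\cdot(\nabla w_j+e_j)\di{z}$ via the weak form of the cell problem, read off symmetry and nonnegativity from the resulting quadratic form, and obtain the $L^\infty$ bound from $\phi(r_u)\leq|\Omega|^{-1}$ together with the energy estimate $\|\nabla w_k+e_k\|_{L^2(Y_u)}\leq|Y_u|^{1/2}$ (your explicit derivation of that energy estimate, and of the measurability of $(t,x)\mapsto\widehat{D_i}(u)(t,x)$, are details the paper leaves implicit). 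Where you genuinely diverge is on strict, $u$-uniform positivity: the paper settles this in one line by pointing to \cite{PS08} and asserting that uniform lower bounds on $\phi(r_u)$ and $|Y_u|$ suffice, whereas you actually prove it --- first pointwise definiteness for each admissible radius via the connectivity/periodicity argument ($\nabla w_\xi=-\xi$ on the perforated torus forces $\xi=0$), then uniformity in $u$ by compactness of the admissible radius interval and continuity of $r\mapsto\widehat{D_i}(r)$. This buys genuine rigour, since lower bounds on porosity and fluid volume alone do not control the smallest eigenvalue of the corrector integral. One caveat: your continuity argument, which pulls the cell problem back to the fixed reference $Y\setminus\overline{B(\nicefrac{1}{4})}$ by a radial diffeomorphism, degenerates as $r\to0$, so continuity at the endpoint $r=0$ (where the tensor should tend to $d_i|\Omega|^{-1}\,\mathrm{Id}$ by a capacity argument) is not actually delivered by that construction; this is harmless in context, because \Cref{lemma_boundsr} guarantees $r_u\geq\e_2>0$ wherever the lemma is used and the paper's own \Cref{lem_trans} gives the Lipschitz continuity you need precisely on $[\e_2,\tfrac12(1-\e_1)]$, but you should either restrict your compact interval accordingly or supply the separate small-$r$ argument.
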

\begin{proof}
Its entries are given by

$$
(\widehat{D_i}(u))_{jk}=d_i\phi(r_u)\int_{Y_u}(\nabla w_{u,k}+e_{k})\cdot e_j\di{z}
$$
where $r_u=\widetilde{\mathcal{L}_{iv}}(u)$, $Y_u=Y\setminus\overline{B}(r_u)$, and $w_u=(w_{u,1},w_{u,2})=\mathcal{L}_{ii}(r_u)$.
The $D_i$ are symmetric since
$$
\int_{Y_u}(\nabla w_{u,k}+e_{k})\cdot e_j\di{z}=\int_{Y_u}(\nabla w_{u,k}+e_{k})\cdot\left(\nabla w_{u,j}+e_j\right)\di{z}
$$
by way of $w_{u,k}$ solving the cell problem.

Via that representation, non negativity is also straightforward to show (we refer to \cite[Section 12.5]{PS08} for a similar argument) as long as $\phi(r_u)$ is non negative.
For the positivity, we have to ensure that there is some $c_i>0$ such that $\phi(r_u),\, |Y_u|\geq c_i$ for all $(t,x)\in S\times\Omega$.
Both hold true if $r_u$ is bounded away from $\nicefrac{1}{2}$, i.e, if there is some $\e_1>0$ such that $2r_u\leq1-\e_1$ for all $(t,x)\in S\times\Omega$.

Now, regarding the boundedness of $D_i$, we first see that $|\phi(r_u)|\leq|\Omega|^{-1}$ when $0\leq2r_u\leq1-\e_1$ is satisfied.
Due to $|Y_u|\leq|Y|=1$, boundedness of $D_i$ is clear.
\end{proof}

In the following, we will try to establish sufficient conditions for a function $u\in L^2((0,s)\times\Omega)$ to guarantee that the condition $2r_u\leq1-\e_1$ is met.
Setting

$$
	a_u(t,x)=2\pi\alpha\sum_{i=1}^N(a_iu_i-\beta_iv_u),
$$
we get

\begin{equation}
	r_u(t,x)=r_0(x)+\int_0^ta_u(\tau,x)\di{\tau}.
\end{equation}

\begin{lemma}[Bounds for $r$]\label{lemma_boundsr}
If $M,\e_1,\e_2>0$ satisfy
\begin{equation}\label{eq:lemma_boundsr}
    M\left(e^{bt}-1\right)\leq \frac{b}{2\pi\alpha a}\min\{1-2\sup r_0-\e_1, \inf r_0-\e_2-2\pi\alpha bt\sup v_0\}
\end{equation}
for all $t\in(0,s)$, it holds  $2\e_2\leq 2r_u\leq1-\e_1$ for all $u\in T_{s,M}$.
\end{lemma}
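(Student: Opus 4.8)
The plan is to work directly from the integral representation $r_u(t,x)=r_0(x)+\int_0^t a_u(\tau,x)\di{\tau}$ with $a_u=2\pi\alpha\sum_{i=1}^N(a_iu_i-\beta_iv_u)$ established just above the statement, and to reduce the whole claim to a two-sided pointwise estimate of the increment $\int_0^t a_u(\tau,x)\di{\tau}$ that is uniform in $x\in\Omega$ and over all $u\in T_{s,M}$. Since $r_0$ is pinned between $\inf r_0$ and $\sup r_0$ by the initial-data table, the desired sandwich $\e_2\le r_u\le (1-\e_1)/2$ will follow once the growth term is controlled from above (to avoid clogging, $2r_u\le 1-\e_1$) and the shrinkage term from below (to keep the radius away from $0$, $2r_u\ge 2\e_2$). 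The very asymmetry of the two arguments of the minimum already signals that the two directions must be handled with different sign conventions inside $a_u$.

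The key preparatory step is a uniform estimate for $v_u=\mathcal{L}_v(u)$. First I would read off from the explicit formula for $\mathcal{L}_v$ that $v_u\ge 0$ whenever $0\le u_i\le M$ and $v_0\ge 0$; this nonnegativity is what I expect to be the genuinely load-bearing structural fact, since it is exactly what lets me \emph{drop} the $-\beta_iv_u$ contribution when bounding $a_u$ from above. For the upper bound on $v_u$ I would split the representation into its initial-data part $e^{-bt}v_0$ and its $u$-driven part, estimate $u_i\le M$ in the latter, and use $\int_0^\tau e^{b\sigma}\di{\sigma}=(e^{b\sigma}-1)/b$; this is precisely where the factor $e^{bt}-1$ in the statement originates, whereas the initial-data part integrates to a term linear in $t$ and proportional to $\sup v_0$, matching the summand $2\pi\alpha bt\,\sup v_0$. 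Assembling these into $a_u$ then gives a two-sided bound in which the upper estimate discards the nonpositive $-\beta_iv_u$, while the lower estimate discards the nonnegative $a_iu_i$ and inserts the $v_u$-upper bound.

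Finally I would integrate these two pointwise bounds over $(0,t)$, obtaining an upper envelope of the form $r_u\le\sup r_0+c\,M(e^{bt}-1)$ and a lower envelope $r_u\ge\inf r_0-c\,M(e^{bt}-1)-2\pi\alpha bt\,\sup v_0$, where any genuinely linear-in-$t$ growth term is majorized using the elementary inequality $t\le(e^{bt}-1)/b$ so that both directions share the single majorant $M(e^{bt}-1)$. Imposing $2r_u\le1-\e_1$ on the upper envelope and $2r_u\ge2\e_2$ on the lower one, and solving each for $M(e^{bt}-1)$, reproduces precisely the two arguments of the minimum, with the common prefactor $\tfrac{b}{2\pi\alpha a}$ fixing the constant $c$; requiring both to hold for every $t\in(0,s)$ is then exactly the assumed inequality \eqref{eq:lemma_boundsr}. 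The main obstacle I anticipate is not any single calculation but the bookkeeping: making the $v_u$ estimate uniform in $x$ and across the whole set $T_{s,M}$, and keeping the signs straight so that the growth direction exploits $v_u\ge0$ while the shrinkage direction exploits the $v_u$-upper bound, so that the somewhat crude common majorant $M(e^{bt}-1)$ controls both sides at once.
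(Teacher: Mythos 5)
Your overall route is the same as the paper's: solve the $v$-ODE explicitly, bound $v_u$ uniformly over $T_{s,M}$, deduce a two-sided pointwise bound on $a_u$, integrate the representation $r_u=r_0+\int_0^t a_u\di{\tau}$, and read off the two conditions packaged in the minimum. The bookkeeping you anticipate (the factor $(e^{bt}-1)/b$ from $\int_0^te^{b\tau}\di{\tau}$, the linear-in-$t$ term proportional to $\sup v_0$, and the use of $t\le(e^{bt}-1)/b$ to unify the majorants) all matches the paper's computation.

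The one genuine gap is precisely the step you single out as load-bearing: dropping $-\beta_iv_u$ in the upper bound for $a_u$ on the grounds that $v_u\ge0$, which you justify under the hypothesis $0\le u_i\le M$. But $T_{s,M}$ is defined only by $\|u_i\|_\infty\le M$, so it contains sign-changing $u$ (and indeed the paper's boundedness lemma only yields $u_i\ge -t\esssup(F_i(\tilde u))_-$, so one cannot silently restrict to the nonnegative cone). For such $u$ the function $v_u$ may be negative, $-\beta_iv_u$ then contributes \emph{positively} to $a_u$, and it cannot be discarded; the same sign assumption also underlies your discarding of $a_iu_i$ in the lower envelope. The repair is what the paper does: from $u_i\ge-M$ one gets $v_u\ge-\frac{a}{b}M(e^{bt}-1)$, hence $-\beta_iv_u\le\frac{a\beta_i}{b}M(e^{bt}-1)$ and $a_u\le2\pi\alpha aMe^{bt}$, which after integration gives exactly the upper envelope $r_u\le\sup r_0+\frac{2\pi\alpha aM}{b}(e^{bt}-1)$ you were aiming for (and symmetrically $a_iu_i\ge-a_iM$ for the lower envelope, the extra term being absorbed into the $M(e^{bt}-1)$ majorant). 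With that substitution your argument coincides with the paper's proof.
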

\begin{proof}
For every $u\in T_{s,M}$, we find that

$$
v_u(t,x)=e^{-bt}\left(v_0(x)+\sum_{i=1}^Na_i\int_0^te^{b\tau}u_i(\tau,x)\di{\tau}\right).
$$
As a consequence,

$$
-\frac{a}{b} M(e^{bt}-1)\leq v_u(t,x)\leq v_0(x)+\frac{a}{b} M(e^{bt}-1).
$$
This implies

$$
a_u=2\pi\alpha\sum_{i=1}^N(a_iu_i-\beta_iv_u)\leq2\pi\alpha\left( aM+aM(e^{bt}-1)\right)=2\pi\alpha aMe^{bt}
$$
as well as

$$
a_u\geq-2\pi\alpha\left(aM+bv_0(x)+aM(e^{bt}-1)\right)=-2\pi\alpha\left(bv_0(x)+aMe^{bt}\right).
$$
Therefore,

$$
\inf r_0-2\pi\alpha\left(tb\sup v_0+\frac{aM}{b}(e^{bt}-1)\right)\leq r_u(t,x)\leq \sup r_0+2\pi\frac{\alpha aM}{b}(e^{bt}-1).
$$
As a consequence, $2\e_2<2r_u<1-\e_1$ can be ensured by the following two conditions:

\begin{align*}
M\left(e^{bt}-1\right)&\leq\frac{b}{2\pi\alpha a}\left(1-2\sup r_0-\e_1\right),\\
M\left(e^{bt}-1\right)&\leq \frac{b}{2\pi\alpha a}(\inf r_0-\e_2-2\pi\alpha bt\sup v_0).
\end{align*}
\end{proof}

\begin{remark}
The condition \ref{eq:lemma_boundsr} required in \Cref{lemma_boundsr} can always be met (over some possibly small time interval $(0,s)$) for $M, \e_1, \e_2$ small enough as long as the initial radius distribution satisfies $2\e_2<2r_0(x)<1-\e_1$.
Connecting \Cref{lemma_boundsr} with \Cref{lemma_diffus} leads to well behaved diffusivities for $u\in T_{s,M}$.
The additional bound from below in the form of $\e_2$ is needed for the transformation for the cell problem for $w_k$.
\end{remark}

Now, looking at the r.h.s.~of our reaction diffusion equation, we have for $u\in T_{s,M}$ (setting $\gamma=\max_{i,j}\gamma_{ij}$):

\begin{equation}\label{est_F1}
-M^2\gamma\left(N-\frac{k+1}{2}\right)\leq R_k(u)\leq M^2\gamma\left(N-\frac{k+1}{2}\right)\quad (1\leq k\leq N).
\end{equation}
Due to $r_u\leq\nicefrac{1}{2}$ and

$$
\frac{2\pi r_u}{1-\pi r_u^2}\leq\frac{\pi}{1-\nicefrac{\pi}{4}}\leq15
$$
we arrive at

\begin{equation}\label{est_F2}
\frac{2\pi r_u}{1-\pi r_u^2}(a_iu_i-\beta_iv_u)\leq 15\left(a_iM+\frac{a}{b}\beta_iM(e^{bt}-1)\right),
\end{equation}
and

\begin{equation}\label{est_F3}
\frac{2\pi r_u}{1-\pi r_u^2}(a_iu_i-\beta_iv_u)\geq-15\left(a_iM+\beta_i\left(v_0(x)+\frac{a}{b}M(e^{bt}-1)\right)\right).
\end{equation}
As a consequence, for every $u\in T_{s,M}$, we find that $F_i(u)\in L^\infty(S\times\Omega)$ for all $i=1,...,N$.
In particular, we find that

\begin{equation}\label{eq:rhs}
\sup\{\|F_i(u)\|_\infty\ : u\in T_{s,M}\}=C
\end{equation}
where the constant $C$ depends only $s,M$.
\begin{lemma}[Estimates for the radius]\label{lem_rad}
For $u^{(1)},u^{(2)}\in T_{s,M}$ let $r^{(1)}, r^{(2)}$ be the corresponding solutions of the radius ODE problem.
Then,

\begin{align*}
\left|r^{(1)}-r^{(2)}\right|
&\leq C\int_0^t\left(\left|u^{(1)}-u^{(2)}\right|+\int_0^\tau e^{bs}\left|u^{(1)}-u^{(2)}\right|\di{s}\right)\di{\tau}.
\end{align*}
where the constant $C>0$ is independent of the particular choice of $u^{(k)}$ ($k=1,2$)
\end{lemma}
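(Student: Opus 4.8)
The plan is to exploit the explicit solution representations for $r_u$ and $v_u$ already recorded in Steps $(a)$ and $(b)$ (and reused in the proof of \Cref{lemma_boundsr}), subtract the two solutions, and then propagate the resulting differences through an elementary triangle-inequality estimate. Writing $v^{(k)}:=v_{u^{(k)}}$ for the $v$-functions associated to $u^{(k)}$, the structural point to keep in mind is that $r_u$ depends on $u$ through \emph{two} channels: directly, via the term $\sum_i a_iu_i$ inside $a_u$, and indirectly, via $v_u$, which is itself an integral functional of $u$. The two terms on the right-hand side of the claimed estimate correspond exactly to these channels — the plain $|u^{(1)}-u^{(2)}|$ coming from the direct dependence, and the nested time-integral $\int_0^\tau e^{bs}|u^{(1)}-u^{(2)}|\,ds$ encoding the memory stored in $v_u$.

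First I would use $r_u(t,x)=r_0(x)+\int_0^t a_u(\tau,x)\,d\tau$ and note that the common initial datum $r_0$ cancels, so that
$$
r^{(1)}(t,x)-r^{(2)}(t,x)=\int_0^t\bigl(a_{u^{(1)}}(\tau,x)-a_{u^{(2)}}(\tau,x)\bigr)\,d\tau .
$$
The whole estimate therefore reduces to a pointwise-in-$\tau$ control of $|a_{u^{(1)}}-a_{u^{(2)}}|$. Expanding the definition of $a_u$ isolates the two channels,
$$
a_{u^{(1)}}-a_{u^{(2)}}=2\pi\alpha\sum_{i=1}^N\Bigl(a_i\bigl(u_i^{(1)}-u_i^{(2)}\bigr)-\beta_i\bigl(v^{(1)}-v^{(2)}\bigr)\Bigr).
$$

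Next I would handle the $v$-difference with the explicit formula for $v_u$. Since the $v_0$-contributions $e^{-b\tau}v_0$ are identical for both inputs, they cancel, leaving
$$
v^{(1)}(\tau,x)-v^{(2)}(\tau,x)=e^{-b\tau}\sum_{i=1}^N a_i\int_0^\tau e^{bs}\bigl(u_i^{(1)}(s,x)-u_i^{(2)}(s,x)\bigr)\,ds .
$$
Using $e^{-b\tau}\le 1$ and the triangle inequality over the finite sum, this is bounded by $C\int_0^\tau e^{bs}\,|u^{(1)}-u^{(2)}|\,ds$, with a constant $C$ depending only on $\alpha$, the fixed coefficients $a_i,\beta_i$, and $N$ — and hence independent of the particular $u^{(k)}$, which is the uniformity the lemma requires.

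Finally I would collect the two pieces: the direct term is dominated by $C|u^{(1)}-u^{(2)}|$ and the indirect term by the memory integral just obtained, giving $|a_{u^{(1)}}-a_{u^{(2)}}|\le C\bigl(|u^{(1)}-u^{(2)}|+\int_0^\tau e^{bs}|u^{(1)}-u^{(2)}|\,ds\bigr)$; integrating in $\tau$ over $(0,t)$ yields the stated inequality. There is no genuine analytic obstacle here — once the explicit representations are in hand, the argument is purely a matter of careful bookkeeping. The only subtle point is precisely not to forget the indirect dependence of $r_u$ on $u$ through $v_u$, since that is what produces the nested $\int_0^\tau e^{bs}(\cdots)\,ds$ term; a naive estimate ignoring the $v$-channel would miss exactly this contribution.
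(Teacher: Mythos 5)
Your proposal is correct and follows essentially the same route as the paper's proof: both insert the explicit integral representations of $r^{(k)}$ and $v^{(k)}$, cancel the common initial data, bound $e^{-b\tau}\le 1$, and apply the triangle inequality over the finite sums to absorb the coefficients $a_i,\beta_i,\alpha$ into a uniform constant $C$. Your explicit separation of the direct and indirect (memory) channels is a clearer exposition of exactly the bookkeeping the paper performs in a single displayed chain of inequalities.
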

\begin{proof}
The radius ODE can be solved by integration ($k=1,2$):

$$
r^{(k)}(t,x)=r_0(x)+2\pi\alpha\sum_{i=1}^N\int_0^ta_iu_i^{(k)}(\tau,x)-\beta_iv^{(k)}(\tau,x)\di{\tau}
$$
where $v^{(k)}$ are given via

$$
v^{(k)}(t,x)=e^{-bt}\left(v_0(x)+\sum_{i=1}^Na_i\int_0^te^{b\tau}u_i^{(k)}(\tau,x)\di{\tau}\right).
$$
Consequently, we can estimate

\begin{align*}
\left|r^{(1)}-r^{(2)}\right|&\leq2\pi\alpha\sum_{i=1}^N\int_0^t\left(a_i\left|u_i^{(1)}-u_i^{(2)}\right|
+\beta_i\sum_{j=1}^Na_j\int_0^\tau e^{bs}\left|u_j^{(1)}-u_j^{(2)}\right|\di{s}\right)\di{\tau}\\
&\leq C\int_0^t\left(\left|u^{(1)}-u^{(2)}\right|+\int_0^\tau e^{bs}\left|u^{(1)}-u^{(2)}\right|\di{s}\right)\di{\tau}.
\end{align*}
where the constant $C>0$ is independent of the particular choice of $u^{(k)}$ ($k=1,2$).
\end{proof}

\begin{lemma}[Estimates for the cell problem]\label{lem_trans}
Let $\e_2\leq r_1\leq r_2\leq\nicefrac{1}{2}(1-\e_1)$ and let $w^{(i)}_k$, $k,i=1,2$, solve

\begin{alignat*}{2}
-\Delta w_{k}^{(i)}&=0&\quad&\text{in}\ \ S\times Y^{(i)},\\
-\nabla w_{k}^{(i)}\cdot n&=e_k\cdot n&\quad&\text{on}\ \ S\times\Sigma^{(i)},\\
\int_{Y^{(i)}}w_k^{(i)}(y)\di{y}&=0,&\\
y&\mapsto w_k^{(i)}(y)&\quad&\text{is $Y$-periodic}.
\end{alignat*}
Then, the following estimate holds:

$$
\left|\int_{Y^{(1)}}\nabla w_k^{(1)}\cdot e_j\di{y}-\int_{Y^{(2)}}\nabla w_k^{(2)}\cdot e_j\di{y}\right|
\leq C|r^{(1)}-r^{(2)}|,
$$
where the constant $C>0$ might dependent on $e_1$ and $e_2$ but not on the particular choice of $r^{(1)}$ and $r^{(2)}$.
Here, we have set $Y^{(j)}=Y\setminus\overline{B(r^{(j)})}$ and $\Sigma^{(j)}=\partial B(r^{(j)})$.
\end{lemma}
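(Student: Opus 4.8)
The difficulty is that $w_k^{(1)}$ and $w_k^{(2)}$ solve (otherwise identical) Neumann problems on the \emph{different} perforated cells $Y^{(1)}$ and $Y^{(2)}$, so one cannot subtract them directly. The plan is to transport both problems onto one fixed reference cell by a radial diffeomorphism that depends on the radius only inside a fixed annulus, and then to exploit the Lipschitz dependence of the transported data and coefficients on $r$. The two-sided bound $\e_2\leq r\leq\frac12(1-\e_1)$ is precisely what keeps this transformation non-degenerate, which is the role of $\e_2$ flagged in the preceding remark. Concretely, I fix $R_0$ with $\frac12(1-\e_1)<R_0<\frac12$, so that $\overline{B(R_0)}\subset Y$ stays away from $\partial Y$, fix a reference radius $r_*\in[\e_2,\frac12(1-\e_1)]$, and set $Y_*=Y\setminus\overline{B(r_*)}$, $\Sigma_*=\partial B(r_*)$. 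For each admissible $r$ I build $\Phi_r\colon\overline{Y_*}\to\overline{Y\setminus\overline{B(r)}}$ that equals the identity on $Y\setminus B(R_0)$ and, in polar coordinates $(\rho,\theta)$ about the center $a$, maps $\rho\mapsto\psi_r(\rho)$ with $\psi_r(r_*)=r$, $\psi_r(R_0)=R_0$, $\psi_r$ smooth and strictly increasing (e.g.\ the affine interpolation). Since the annulus thickness $R_0-r>0$ is bounded below uniformly in $r$, the maps $\Phi_r,\Phi_r^{-1}$ are uniformly $C^1$, and $r\mapsto\Phi_r$ is Lipschitz, $\|\Phi_{r_1}-\Phi_{r_2}\|_{C^1(\overline{Y_*})}\leq C|r_1-r_2|$. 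Hence the Jacobian $J_r=\det D\Phi_r$, the pulled-back coefficient $A_r=J_r\,(D\Phi_r)^{-1}(D\Phi_r)^{-T}$, and the transported surface datum are all uniformly bounded, with $A_r$ uniformly elliptic, and all Lipschitz in $r$.

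Next I transport the problems. The pull-backs $\hat w_k^{(i)}:=w_k^{(i)}\circ\Phi_{r_i}\in H^1_\#(Y_*)$ satisfy, after renormalizing $\int_{Y_*}\hat w_k^{(i)}=0$,
$$\int_{Y_*}A_{r_i}\nabla\hat w_k^{(i)}\cdot\nabla\varphi\di{z}=-\int_{\Sigma_*}g_{r_i}\,\varphi\di{\sigma}\qquad\text{for all }\varphi\in H^1_\#(Y_*),$$
where $g_{r_i}$ is the transported Neumann datum (built from $e_k\cdot n$, the transformed normal, and the surface-measure factor), again uniformly bounded and Lipschitz in $r_i$. Uniform coercivity of the bilinear form on $H^1_\#(Y_*)/\R$ (uniform Poincaré–Wirtinger on the fixed Lipschitz domain $Y_*$, together with the uniform lower bound on $A_r$) yields well-posedness and the uniform bound $\|\hat w_k^{(i)}\|_{H^1(Y_*)}\leq C$. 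Subtracting the two identities, rewriting the left-hand side as $\int_{Y_*}A_{r_1}\nabla(\hat w_k^{(1)}-\hat w_k^{(2)})\cdot\nabla\varphi+\int_{Y_*}(A_{r_1}-A_{r_2})\nabla\hat w_k^{(2)}\cdot\nabla\varphi$, and testing with $\varphi=\hat w_k^{(1)}-\hat w_k^{(2)}$ (mean-zero, so coercivity applies and the trace term is absorbed) gives
$$\|\nabla(\hat w_k^{(1)}-\hat w_k^{(2)})\|_{L^2(Y_*)}\leq C\bigl(\|A_{r_1}-A_{r_2}\|_{L^\infty}+\|g_{r_1}-g_{r_2}\|_{L^2(\Sigma_*)}\bigr)\leq C|r_1-r_2|,$$
the last inequality by the Lipschitz dependence established above.

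Finally I transform the target integrals back. By the change of variables $y=\Phi_{r_i}(z)$ (using $\nabla_y w_k^{(i)}=(D\Phi_{r_i})^{-T}\nabla_z\hat w_k^{(i)}$ and $\di{y}=J_{r_i}\di{z}$) and writing $B_r:=J_r(D\Phi_r)^{-T}$,
$$\int_{Y^{(i)}}\nabla w_k^{(i)}\cdot e_j\di{y}=\int_{Y_*}B_{r_i}\nabla\hat w_k^{(i)}\cdot e_j\di{z}.$$
The difference of the two target integrals equals $\int_{Y_*}B_{r_1}\nabla(\hat w_k^{(1)}-\hat w_k^{(2)})\cdot e_j+\int_{Y_*}(B_{r_1}-B_{r_2})\nabla\hat w_k^{(2)}\cdot e_j$. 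The first term is bounded by $\|B_{r_1}\|_{L^\infty}\|\nabla(\hat w_k^{(1)}-\hat w_k^{(2)})\|_{L^2}\leq C|r_1-r_2|$ via the stability estimate, and the second by $\|B_{r_1}-B_{r_2}\|_{L^\infty}\|\nabla\hat w_k^{(2)}\|_{L^2}\leq C|r_1-r_2|$ via the Lipschitz dependence of $B_r$ and the uniform a priori bound. This establishes the claimed estimate.

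I expect the construction step to be the crux: producing $\Phi_r$ together with $C^1$-bounds and Lipschitz-in-$r$ dependence of $J_r$, $A_r$, $B_r$, and $g_r$ that are all \emph{uniform} over the admissible range. Everything hinges on the annulus not degenerating, i.e.\ on $r$ staying bounded below by $\e_2$ and above by $\frac12(1-\e_1)$; without the lower bound $\e_2$ the radial map near $\rho=r$ could steepen uncontrollably as the hole shrinks, destroying the uniform $C^1$ control. The stability estimate and the final change-of-variables bookkeeping are then routine consequences of Lax–Milgram and uniform coercivity on the fixed cell $Y_*$.
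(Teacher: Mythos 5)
Your proposal is correct and follows essentially the same strategy as the paper's proof: a radial bi-Lipschitz transformation supported in an annulus between the hole and a fixed larger circle, a comparison of the transported weak forms yielding an $H^1$-stability estimate of order $|r^{(1)}-r^{(2)}|$, and a final change of variables splitting the difference of the target integrals into a stability term and a coefficient-perturbation term. The only (inessential) difference is that you pull both problems back to a fixed reference cell $Y_*$ and track Lipschitz dependence of the transported coefficients on $r$, whereas the paper maps $Y^{(2)}$ directly onto $Y^{(1)}$ and estimates the deviation of the transformed coefficients from the identity.
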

\begin{proof}
We prove this statement in three steps. First, we introduce a coordinate transform that allows us to compare the different solutions and, second, go on proving some important energy estimates.
Finally, we use these energy estimates to proof the desired result.\\[.2cm]

\emph{Step1: Transformation:}
We set $a=(\nicefrac{1}{2},\nicefrac{1}{2})$ and introduce the transformation $\xi\colon\overline{Y}\to\overline{Y}$ given by

$$
\xi(y)=
\begin{cases}
    y,\quad &|y-a|\geq\nicefrac{1}{2},\\ 
    (1-\chi(|y-a|))y+\chi(|y-a|)\left(\nicefrac{r^{(1)}}{r^{(2)}}(y-a)+a\right),\quad&r^{(2)}\leq|y-a|\leq\nicefrac{1}{2},\\
    \nicefrac{r^{(1)}}{r^{(2)}}(y-a)+a,\quad &|y-a|\leq r^{(2)}
\end{cases}
$$
Here, $\chi\colon[r^{(2)},\nicefrac{1}{2}]\to[0,1]$ is a smooth cut-off function with compact support (i.e., $\chi\in C_0^\infty(r^{(2)},\nicefrac{1}{2})$) satisfying $\chi(r^{(2)})=1$, $\chi(\nicefrac{1}{2})=0$, as well as $-\nicefrac{4}{\e_1}\leq\chi'(z)\leq0$.
As a result, $\xi$ is a smooth function as well and satisfies $\xi(Y^{(2)})=Y^{(1)}$ and $n_{\Sigma^{(1)}}(\xi(y))=n_{\Sigma^{(2)}}(y)$ for all $y\in\Sigma^{(2)}$.

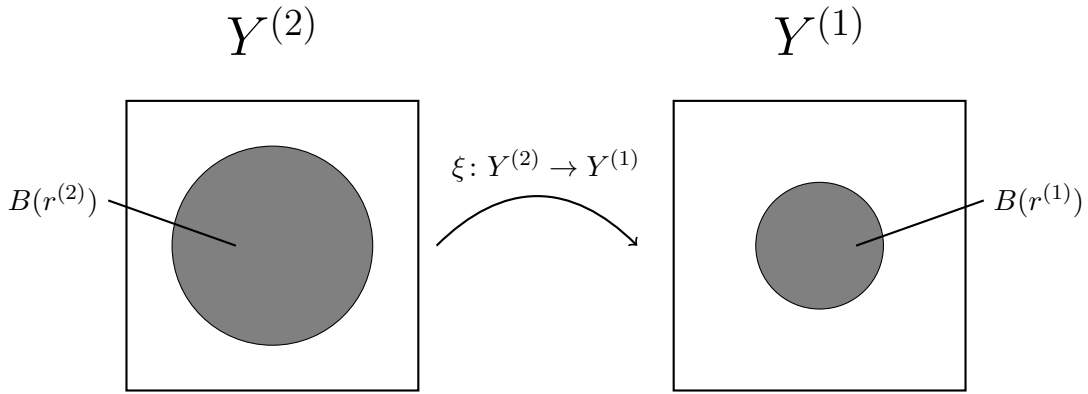
\begin{figure}[htbp]
\centering
\begin{tikzpicture}[scale=1.2]
    \draw[thick] (-1.6,-1.6) rectangle (1.6,1.6);
    \draw[fill=black!50] (0,0) circle (1.1cm);
	\draw (0,2) node[above] {{\huge{$Y^{(2)}$}}};
	\draw[thick] (-0.4,0) -- (-1.8,0.5);
	\draw (-1.8,0.5) node[left] {{$B({r^{(2)}})$}};
    \draw[thick] (-1.6+6,-1.6) rectangle (1.6+6,1.6);
    \draw[fill=black!50] (0+6,0) circle (.7cm);
	\draw (0+6,2) node[above] {{\huge{$Y^{(1)}$}}};
	\draw[thick] (0.4+6,0) -- (1.8+6,0.5);
	\draw (1.8+6,0.5) node[right] {{$B({r^{(1)}})$}};
	\draw[thick,->] (1.8,0)  to [out=45,in=135,looseness=1.2]  (4,0);
	\draw (3,0.6) node[above] {{$\xi\colon Y^{(2)}\to Y^{(1)}$}};
\end{tikzpicture}
\caption{Sketch of the transformation connecting reference cells for different radii $r^{(1)}$ and $r^{(2)}$.} 
    \label{fig:transform}
\end{figure}

Calculating the Jacobi matrix for $\xi$, we see that $D\xi=\mathds{I}_2$ for $|y-a|\geq\nicefrac{1}{2}$ and $D\xi=\left(\nicefrac{r^{(1)}}{r^{(2)}}\right)^2\mathds{I}_2$ for $|y-a|\leq r^{(2)}$.
For the transition part, i.e., $r^{(2)}\leq|y-a|\leq\nicefrac{1}{2}$, we calculate

\begin{align*}
\partial_{y_i}\xi_j(y)&=
    \partial_{y_i}\left[y\mapsto(1-\chi(|y-a|))y_j+\chi(|y-a|)\left(\nicefrac{r^{(1)}}{r^{(2)}}(y_j-\nicefrac{1}{2})+\nicefrac{1}{2}\right)\right]\\
    &=\delta_{ij}\bigg(1+(\nicefrac{r^{(1)}}{r^{(2)}}-1)\chi(|y-a|)\bigg)
   +\left(\nicefrac{r^{(1)}}{r^{(2)}}(y_j-\nicefrac{1}{2})+\nicefrac{1}{2}-y_j\right)\frac{y_i-\nicefrac{1}{2}}{|y-a|}\chi'(|y-a|)
\end{align*}
As a consequence, we find that the Jacobian is given by the symmetric matrix

\begin{align}
    \label{deter}
	D\xi(y)=a(|y-a|)\begin{pmatrix}1&0\\0&1\end{pmatrix}
	+b(|y-a|)\begin{pmatrix}
		(y_1-\nicefrac{1}{2})^2& 
		(y_1-\nicefrac{1}{2})(y_2-\nicefrac{1}{2})\\
		(y_1-\nicefrac{1}{2})(y_2-\nicefrac{1}{2})&
		(y_2-\nicefrac{1}{2})^2
	\end{pmatrix}
\end{align}
where (setting $\overline{r}=r^{(2)}-r^{(1)}\geq0$)

\begin{align*}
a(z)=\left(1-\frac{\overline{r}}{r^{(2)}}\chi(z)\right),\quad b(z)=-\frac{\chi'(z)}{z}\frac{\overline{r}}{r^{(2)}}.
\end{align*}
We can calculate the determinant as
$$
\det D\xi(y)=a(|y-a|)\big(a(|y-a|)+b(|y-a|)\left(y_1^2-y_1+y_2^2-y_2+1\right)\big).
$$
Since $a(|y-a|)>0$, $b(|y-a|)\geq0$ and $y_1^2-y_1+y_2^2-y_2+1>0$ for all $y=(y_1,y_2)\in Y$, we find that
$$
\det D\xi(y)\geq\inf_{r^{(2)}\leq|y-a|\leq\nicefrac{1}{2}}a^2(|y-a|)=\left(\frac{r^{(1)}}{r^{(2)}}\right)^2.
$$
This shows that 

$$
4\e_2^2\leq\left(\frac{\e_2}{\nicefrac{1}{2}(1-\e_1)}\right)^2\leq\det D\xi(y)\leq1
$$
which implies invertibility of $D\xi$.\\[.3cm]

\emph{Step 2: Energy estimates}.
In the following, we set $F(y)=D\xi(y)$ and $J(y)=|\det F(y)|$.
We start with the the weak forms

$$
\int_{Y^{(i)}}\nabla w^{(i)}_{k}\cdot\nabla \eta^{(i)}\di{z}=\int_{\Sigma^{(i)}}e_k\cdot n_{\Sigma^{(i)}}\eta^{(i)}\di{\sigma} \quad \left(\eta^{(i)} \in H^1_\#(Y^{(i)}),\ i=1,2\right).
$$
We take the difference of these two weak forms:

$$
\int_{Y^{(1)}}\nabla w^{(1)}_{k}\cdot\nabla \eta^{(1)}\di{y}-\int_{Y^{(2)}}\nabla w^{(2)}_{k}\cdot\nabla \eta^{(2)}\di{y}=e_k\cdot\left[\int_{\Sigma^{(1)}}n_{\Sigma^{(1)}}\eta^{(1)}\di{\sigma}-\int_{\Sigma^{(2)}}n_{\Sigma^{(2)}}\eta^{(2)}\di{\sigma}\right].
$$
and transform the surface integral on the right-hand side in order to arrive at
$$
\int_{\Sigma^{(1)}}n_{\Sigma^{(1)}}\eta^{(i)}\di{\sigma}-\int_{\Sigma^{(2)}}n_{\Sigma^{(2)}}\eta^{(2)}\di{\sigma}=\int_{\Sigma^{(2)}}n_{\Sigma^{(1)}}(\xi(y))\eta^{(1)}(\xi(y))|\det D\xi(y)|\di{\sigma}-\int_{\Sigma^{(2)}}n_{\Sigma^{(2)}}\eta^{(2)}\di{\sigma}.
$$
%
By construction, we have $n_{\Sigma^{(1)}}(\xi(y))=n_{\Sigma^{(2)}}(y)$ for all $y\in\Sigma^{(2)}$ leading to

\begin{align*}
\int_{\Sigma^{(1)}}n_{\Sigma^{(1)}}\eta^{(1)}\di{\sigma}-\int_{\Sigma^{(2)}}n_{\Sigma^{(2)}}\eta^{(2)}\di{\sigma}&=\int_{\Sigma^{(2)}}\bigg(\eta^{(1)}(\xi(y))\det D\xi(y)-\eta^{(2)}(y)\bigg)n_{\Sigma^{(2)}}\di{\sigma}\\
&=\int_{\Sigma^{(2)}}\bigg(\eta^{(1)}(\xi(y))-\eta^{(2)}(y)\bigg)\det D\xi(y)n_{\Sigma^{(2)}}\di{\sigma}\\
&\qquad+\int_{\Sigma^{(2)}}\bigg(\det D\xi(y)-1\bigg)\eta^{(2)}(y)n_{\Sigma^{(2)}}\di{\sigma}
\end{align*}
For the volume integral on the l.h.s., we get (note that the Jacobian is symmetric)

\begin{multline*}
\int_{Y^{(1)}}\nabla w^{(1)}_{k}\cdot\nabla \eta^{(1)}\di{y}-\int_{Y^{(2)}}\nabla w^{(2)}_{k}\cdot\nabla \eta^{(2)}\di{y}\\
=\int_{Y^{(2)}}\det D\xi(D\xi)^{-2}\nabla w^{(1)}_{k}(\xi)\cdot\nabla \eta^{(1)}(\xi)-\nabla w^{(2)}_{k}\cdot\nabla \eta^{(2)}\di{y}
\end{multline*}
and, as a consequence,
\begin{multline*}
\int_{Y^{(2)}}\det D\xi(D\xi)^{-2}\nabla w^{(1)}_{k}(\xi)\cdot\nabla \eta^{(1)}(\xi)-\nabla w^{(2)}_{k}\cdot\nabla \eta^{(2)}\di{y}\\
=\int_{\Sigma^{(2)}}\bigg(\eta^{(1)}(\xi(y))-\eta^{(2)}(y)\bigg)\det D\xi(y)n_{\Sigma^{(2)}}\di{\sigma}
+\int_{\Sigma^{(2)}}\bigg(|\det D\xi(y)|-1\bigg)\eta^{(2)}(y)n_{\Sigma^{(2)}}\di{\sigma}.
\end{multline*}
Now, choosing $\widetilde{\eta}^1=\eta^{(2)}=\widetilde{w}_k^{(1)}-w_k^{(2)}=:\overline{w}_k$, this leads to

\begin{multline*}
\|\nabla \overline{w}_k\|^2_{L^2(Y^{(2)})}\leq
\int_{Y^{(2)}}\left|\det D\xi(D\xi)^{-2}-\mathds{I}_2\right|\left|\nabla \widetilde{w}^{(1)}_{k}\right|\cdot\left|\nabla \overline{w}_k\right|\di{y}\\
+\int_{\Sigma^{(2)}}\bigg|\det D\xi(y)-1\bigg|\left|\overline{w}_k\right|\di{\sigma}.
\end{multline*}
For $y\in\Sigma^{(2)}$, i.e., $|y-a|=r^{(2)}$, we have

$$
1-\det D\xi(y)=1-\left(\frac{r^{(1)}}{r^{(2)}}\right)^2=\frac{(r^{(2)})^2-(r^{(1)})^2}{(r^{(2)})^2}\leq\frac{\overline{r}}{r^{(2)}}.
$$
Now, for $y\in Y_2$ with $|y-a|\geq\nicefrac{1}{2}$, we have $\det D\xi=1$ and $D\xi=\mathds{I}_2$ and, in the case that $r^{(2)}\leq|y-a|\leq \nicefrac{1}{2}$,

$$
\left|\det D\xi(D\xi)^{-2}-\mathds{I}_2\right|\leq\frac{\left|\det D\xi-1\right|}{|D\xi|^{2}}+\frac{\left|(D\xi)^{-1}-\mathds{I}_2\right|}{|D\xi|}+\left|(D\xi)^{-1}-\mathds{I}_2\right|
$$
Since $|D\xi|^2\geq \det D\xi\geq 4\e_2^2$ and $1-\det D\xi(y)\leq \nicefrac{\overline{r}}{r^{(2)}}$:

$$
\left|\det D\xi(D\xi)^{-2}-\mathds{I}_2\right|\leq\frac{\overline{r}}{4r^{(2)}\e_2^2}+\left(1+\frac{1}{2\e_2}\right)\left|(D\xi)^{-1}-\mathds{I}_2\right|
$$
Finally, via 
$$
\left|(D\xi)^{-1}-\mathds{I}_2\right|\leq \left|(D\xi)^{-1}\right|\left|\mathds{I}_2-D\xi\right|\leq 2\e_2\left|\mathds{I}_2-D\xi\right|
$$
we arrive at (looking at \cref{deter})

$$
\left|\det D\xi(D\xi)^{-2}-\mathds{I}_2\right|
\leq\frac{\overline{r}}{r^{(2)}}\left(\frac{1}{4\e_2^2}+2\e_2+1+\frac{1}{\e_1r^{(2)}}\right)
$$
Therefore we find that

$$
\|\nabla \overline{w}_k\|^2_{L^2(Y^{(2)})}\leq
C(\e_1,\e_2)\overline{r}\left(\int_{Y^{(2)}}\left|\nabla \widetilde{w}^{(1)}_{k}\right|\cdot\left|\nabla \overline{w}_k\right|\di{y}
+\int_{\Sigma^{(2)}}\left|\overline{w}_k\right|\di{\sigma}\right).
$$
Applying Poincar\'e's inequality (possible due to the zero average condition) and the trace theorem leads to the energy estimate

\begin{align}
\label{tr_energy}
\|\overline{w}_k\|_{H^1(Y^{(2)})}\leq \tilde{C}(\e_1,\e_2)\overline{r},
\end{align}
where the constant $\tilde{C}(\e_1,\e_2)>0$ is independent of $r^{(1)}$ and $r^{(2)}$.\\[.3cm]

\emph{Step 3: Proving the result}. Using \cref{tr_energy}, we go on by estimating  the following key expression:

\begin{align*}
\left|\int_{Y^{(1)}}\nabla w_k^{(1)}\cdot e_j\di{y}-\int_{Y^{(2)}}\nabla w_k^{(2)}\cdot e_j\di{y}\right|
&\leq \left|\int_{Y^{(1)}}\nabla w_k^{(1)}\di{y}-\int_{Y^{(2)}}\nabla w_k^{(2)}\di{y}\right|\\
&=\left|\int_{Y^{(2)}}\det D\xi (D\xi)^{-1}\nabla \widetilde{w}_k^{(1)}-\nabla w_k^{(2)}\di{y}\right|\\
&\leq \widehat{C}(\e_1,\e_2)\overline{r},
\end{align*}

\end{proof}

\subsection{A fixed-point argument}
Now, let $\e_1,\e_2, M^*, s^*>0$ and initial conditions $r_0, v_0$ be chosen such that $2\e_2\leq 2r_u(t,x)\leq1-\e_1$ for all $(t,x)\in(0,s^*)\times\Omega$ and all $u\in T_{s^*,M^*}$ (this is possible due to \Cref{lemma_boundsr,lemma_diffus}).
Also, let $0\leq u_{i0}(x)\leq\nicefrac{M^*}{2}$.
These choices imply $F(u)=(F_1(u),...,F_N(u))\in L^\infty((0,s^*)\times\Omega)^N$ for all $u\in T_{s^*,M^*}$ (see \cref{eq:rhs}).
In the following, let $s\in (0,s^*)$ and $M\in(0,M^*)$.

We will now look at the linearized problem: For some $\tilde{u}\in T_{s,M}$, we try to find a function $u\in W((0,s);H^1(\Omega))$ solving

\begin{subequations}
\begin{alignat}{2}
	\partial_tu_i-\dive\left(\widehat{D_i}(\tilde{u})\nabla u_i\right)&=F_i(\tilde{u})&\quad&\text{in}\ \ S\times\Omega,\label{lina}\\
	-\widehat{D_i}(\tilde{u})\nabla u_i\cdot n&=0&\quad&\text{on}\ \ S\times\partial\Omega,\label{linb}\\
	u_i(0)&=u_{i0}&\quad&\text{in}\ \ \Omega.\label{linc}
\end{alignat}
\end{subequations}

\begin{lemma}[Existence result for linearized problem]
\label{existence_linear}
For each $\tilde{u}\in T_{s,M}$, there is a unique $u\in W((0,s);H^1(\Omega))$ solving the problem given by \cref{lina,linb,linc}.
Moreover, the following a priori estimates are satisfied
\begin{multline*}
\|\partial_tu\|^2_{L^2(S;H^1(\Omega)^*)}+\|u\|_{L^\infty((0,s);L^2(\Omega))}^2+\|\nabla u\|_{L^2((0,s)\times\Omega)}^2\\
\leq C\left(\|u_0\|^2_{L^2(\Omega)}+\sum_{i=1}^N\|F_i(\tilde{u})\|_{L^\infty((0,s)\times\Omega)}\right)
\end{multline*}
where the constant $C>0$ does not depend on $\tilde{u}$, $s$, and $M$.
Please note that the above estimate implies boundedness in $W((0,s);H^1(\Omega))$ as well.
\end{lemma}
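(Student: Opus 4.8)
The plan is to exploit the fact that, once $\tilde u\in T_{s,M}$ is fixed, the problem \cref{lina,linb,linc} \emph{decouples} into $N$ independent scalar linear parabolic equations: the diffusivity $\widehat{D_i}(\tilde u)$ and the source $F_i(\tilde u)$ are then prescribed functions of $(t,x)$ that no longer involve the unknown $u$. I would therefore treat each index $i$ separately and work in the Gelfand triple $H^1(\Omega)\hookrightarrow L^2(\Omega)\hookrightarrow H^1(\Omega)^*$, seeking $u_i\in W((0,s);H^1(\Omega))$ that satisfies, for a.e.\ $t$ and all $\varphi\in H^1(\Omega)$,
\begin{equation*}
\langle\partial_tu_i,\varphi\rangle+\int_\Omega\widehat{D_i}(\tilde u)\nabla u_i\cdot\nabla\varphi\di{x}=\int_\Omega F_i(\tilde u)\,\varphi\di{x}.
\end{equation*}

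The first genuine task — and the point on which the applicability of the standard theory hinges — is to verify that the coefficient $(t,x)\mapsto\widehat{D_i}(\tilde u)(t,x)$ is an admissible bounded, measurable, uniformly elliptic matrix field. Measurability would follow by writing $\widehat{D_i}(\tilde u)$ as the composition of the measurable map $(t,x)\mapsto r_{\tilde u}(t,x)$ (measurable since $\tilde u\in L^2$ and $r_{\tilde u}$ is given by its explicit time-integral representation) with the map $r\mapsto\int_{Y\setminus\overline{B(r)}}(\nabla w_k+e_k)\cdot e_j$, whose continuity in $r$ is exactly the content of \Cref{lem_trans} (with the control of $r_{\tilde u}$ from \Cref{lem_rad}). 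Boundedness, symmetry, and the uniform lower bound $\widehat{D_i}(\tilde u)\xi\cdot\xi\ge c_i|\xi|^2$ with $c_i$ independent of $\tilde u$ come directly from \Cref{lemma_diffus}, whose hypothesis $2r_{\tilde u}\le1-\e_1$ holds on $(0,s^*)\times\Omega$ by the standing choice of $\e_1,\e_2,M^*,s^*$ via \Cref{lemma_boundsr}. With these properties in hand, the associated time-dependent bilinear form is bounded and satisfies a G\aa rding inequality $a(t;\varphi,\varphi)\ge c_i\|\nabla\varphi\|_{L^2(\Omega)}^2$, so existence and uniqueness of $u_i\in W((0,s);H^1(\Omega))$ follow from the standard Lions/Galerkin theory for linear parabolic equations; uniqueness is immediate from coercivity applied to the difference of two solutions.

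For the a priori estimate I would use the energy method: testing with $\varphi=u_i$ and using $\langle\partial_tu_i,u_i\rangle=\tfrac12\tfrac{d}{dt}\|u_i\|_{L^2(\Omega)}^2$ together with uniform ellipticity gives
\begin{equation*}
\tfrac12\tfrac{d}{dt}\|u_i\|_{L^2(\Omega)}^2+c_i\|\nabla u_i\|_{L^2(\Omega)}^2\le\tfrac12\|F_i(\tilde u)\|_{L^2(\Omega)}^2+\tfrac12\|u_i\|_{L^2(\Omega)}^2.
\end{equation*}
Gronwall's lemma then bounds $\|u_i\|_{L^\infty((0,s);L^2(\Omega))}$ in terms of $\|u_{i0}\|_{L^2(\Omega)}$ and $\int_0^s\|F_i(\tilde u)\|_{L^2(\Omega)}^2$; crucially, replacing the Gronwall factor $e^{s}$ by $e^{T}$ renders the constant independent of $s$, and since $\|F_i(\tilde u)\|_{L^2(\Omega)}^2\le|\Omega|\,\|F_i(\tilde u)\|_\infty^2$ (the square being interchangeable with the first power stated in the lemma thanks to the uniform bound \cref{eq:rhs}), everything is controlled by the data. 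Reinserting this bound and integrating in time controls $\|\nabla u_i\|_{L^2((0,s)\times\Omega)}^2$, and finally $\|\partial_tu_i\|_{L^2((0,s);H^1(\Omega)^*)}$ is estimated directly from the equation via $|\langle\partial_tu_i,\varphi\rangle|\le\bigl(\|\widehat{D_i}(\tilde u)\|_\infty\|\nabla u_i\|_{L^2(\Omega)}+\|F_i(\tilde u)\|_{L^2(\Omega)}\bigr)\|\varphi\|_{H^1(\Omega)}$ and the preceding bounds. Summing over $i$ yields the stated estimate with a constant depending only on $\Omega$, $T$, and the uniform ellipticity/boundedness constants of \Cref{lemma_diffus}, hence independent of $\tilde u$, $s$, and $M$.

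I expect the only delicate step to be the verification of measurability and \emph{uniform} ellipticity/boundedness of $\widehat{D_i}(\tilde u)$ as a matrix field in $(t,x)$ — that is, the reduction of the nonstandard, homogenization-defined coefficient to an admissible input for classical parabolic theory — which is precisely where \Cref{lemma_diffus,lem_rad,lem_trans} do the work. The remaining existence argument and energy estimate are then routine, the one bookkeeping subtlety being to keep every constant independent of $s$ (by using $e^{T}$) and of $M$ (since $\tilde u$ enters only through the $M$-independent coefficient bounds, together with the explicit $F_i(\tilde u)$ term already appearing on the right-hand side).
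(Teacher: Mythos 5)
Your proposal is correct and follows essentially the same route as the paper: reduce to $N$ decoupled scalar linear parabolic equations with bounded, measurable, uniformly elliptic coefficients (supplied by \Cref{lemma_diffus} under the standing choice of $\e_1,\e_2,M^*,s^*$), invoke standard Lions/Galerkin theory for existence and uniqueness, and then obtain the a priori bounds by testing with $u_i$, applying Gr\"onwall, and estimating $\partial_tu_i$ by duality from the equation. Your additional remarks on measurability of the coefficient field and on the interchangeability of $\|F_i(\tilde u)\|_\infty$ with its square are sensible refinements of details the paper leaves implicit.
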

\begin{proof}
Since $\tilde{u}\in T_{s,M}$, we have $F_i(\tilde{u})\in L^\infty((0,s)\times\Omega)$ ($i=1,...,N$). 
Also, the diffusivity matrix $\widehat{D_i}(\tilde{u})$ is uniformly positive definite (i.e., there is $c_i>0$ such that $\widehat{D_i}(\tilde{u})(t,x)\xi\cdot\xi\geq c_i|\xi|^2$ for all $(t,x)\in (0,s)\times\Omega$ and all $\xi\in\R^3$).
Finally, as the $D_i$ are also bounded, the existence of a unique solution follows by standard theory of parabolic PDE.

To search for the needed {\em a priori} estimates, we test the weak form with $u_i$. Hence, we are led to

$$
\|u_i(t)\|^2_{L^2(\Omega)}+2c_i\int_0^t\|\nabla u_i\|^2_{L^2(\Omega)}\di{\tau}\leq\|u_{i0}\|^2_{L^2(\Omega)}+2\int_0^t\int_\Omega|F_i(\tilde{u})u_i|\di{x}\di{\tau}\quad (t\in(0,s)).
$$
From here, summing over $i=1,...,N$ and applying Grönwall's inequality leads to the desired estimate for $u$ and $\nabla u$.
Similarly, taking a test function $\varphi\in L^2((0,s);H^1(\Omega))$ such that $\|\varphi\|\leq1$, we find that

$$
\langle\partial_tu_i,\varphi\rangle_{L^2((0,s);H^1(\Omega)^*)}\leq\int_\Omega |F_i(\tilde{u})\varphi|\di{x}+\int_\Omega|\widehat{D_i}(\tilde{u})\nabla u_i\nabla\varphi|\di{x}
$$
thus completing the estimate.
\end{proof}

With the solvability of the linarized problem established, we want to investigate under what circumstances we can ensure that $u\in T_{s,M}$ as well; as this would then naturally lead to a potential fixed-point scheme.
As a first point, any $\widetilde{u}\in T_{s,M}$ leads to a solution $u\in W((0,s);H^1(\Omega))$ which again leads to the corresponding solution operator

$$
\mathcal{L}\colon T_{s,M}\to W((0,s);H^1(\Omega))^N.
$$
We now need to show, that $s\in(0,s^*)$ and $M\in(0,M^*)$ can be chosen such that $\mathcal{L}[T_{s,M}]\subset T_{s,M}$.
With the following lemma, we first establish $\mathcal{L}[T_{s,M}]\subset L^\infty((0,s)\times\Omega)^N$.

\begin{lemma}[Boundedness]
\label{lemma_bounded}
For every $\tilde{u}\in T_{s,M}$, the solution of the linearized equation is bounded by
$$
-t\esssup(F_i(\tilde{u}))_-\leq u_i\leq \esssup u_{i0}+t\esssup F_i(\tilde{u}).
$$
In particular, we have $u\in L^\infty((0,s)\times\Omega)^N$.
\end{lemma}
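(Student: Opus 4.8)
The plan is to prove both inequalities by a weak comparison (maximum-principle) argument, comparing the solution $u_i$ against spatially constant super- and subsolutions. For the upper bound I would introduce the barrier $\psi_i(t)=\esssup u_{i0}+t\,\esssup F_i(\tilde u)$, which depends on $t$ only. Because $\nabla\psi_i=0$, it automatically satisfies the homogeneous Neumann condition \cref{linb}, and since $\partial_t\psi_i=\esssup F_i(\tilde u)\geq F_i(\tilde u)$ a.e.\ it is a supersolution of \cref{lina}; moreover $\psi_i(0)=\esssup u_{i0}\geq u_{i0}$. The symmetric lower barrier is $\phi_i(t)=-t\,c_i$ with $c_i:=\esssup\bigl(F_i(\tilde u)\bigr)_-\geq 0$: the elementary inequality $F_i(\tilde u)\geq -c_i$ gives $\partial_t\phi_i\leq F_i(\tilde u)$, so $\phi_i$ is a subsolution, and $\phi_i(0)=0\leq u_{i0}$ since the initial datum is nonnegative.

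Next I would test the weak formulation of the difference $w_i:=u_i-\psi_i$ with its positive part $(w_i)_+\in L^2((0,s);H^1(\Omega))$, which is an admissible test function thanks to the regularity $u_i\in W((0,s);H^1(\Omega))$ from \Cref{existence_linear}. Using the chain-rule identity $\langle\partial_t w_i,(w_i)_+\rangle=\tfrac12\frac{d}{dt}\|(w_i)_+\|_{L^2(\Omega)}^2$, the relation $\nabla w_i\cdot\nabla(w_i)_+=|\nabla(w_i)_+|^2$, and the uniform positive definiteness of $\widehat{D_i}(\tilde u)$ from \Cref{lemma_diffus}, I obtain
$$
\tfrac12\frac{d}{dt}\|(w_i)_+\|_{L^2(\Omega)}^2\leq\int_\Omega\bigl(F_i(\tilde u)-\esssup F_i(\tilde u)\bigr)(w_i)_+\di{x}\leq 0,
$$
since the diffusion term is nonnegative and the integrand on the right is nonpositive. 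As $(w_i)_+(0)=(u_{i0}-\esssup u_{i0})_+=0$, integration in time forces $(w_i)_+\equiv 0$, i.e.\ $u_i\leq\psi_i$. The lower bound follows in exactly the same way: testing the difference $u_i-\phi_i$ against its negative part $(u_i-\phi_i)_-$ and using $F_i(\tilde u)+c_i\geq0$ yields $\|(u_i-\phi_i)_-\|_{L^2(\Omega)}^2\equiv0$, hence $u_i\geq\phi_i=-t\,c_i$.

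Combining the two inequalities gives the stated pointwise bounds, and because $F_i(\tilde u)\in L^\infty((0,s)\times\Omega)$ for $\tilde u\in T_{s,M}$ (see \cref{eq:rhs}), the barriers are finite; this yields the membership $u\in L^\infty((0,s)\times\Omega)^N$. I expect no genuinely new estimate to be required here — the only delicate points are the admissibility of $(w_i)_\pm$ as test functions and the validity of the chain-rule identity for the $\partial_t$-term. Both are standard consequences of $u_i\in W((0,s);H^1(\Omega))$ (e.g.\ via the Lions–Magenes type regularization lemma), but they must be invoked carefully since the diffusivity $\widehat{D_i}(\tilde u)$ is merely $L^\infty$ in $(t,x)$; this regularity bookkeeping, rather than the comparison itself, is the main thing to get right.
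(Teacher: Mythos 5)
Your proof is correct, and it takes a genuinely different route from the paper. The paper decomposes $u_i=\pi_i+\omega_i$ into a homogeneous part $\pi_i$ (zero source, initial datum $u_{i0}$) and an inhomogeneous part $\omega_i$ (source $F_i(\tilde u)$, zero initial datum); it bounds $\pi_i$ by testing with $(\pi_i-\esssup u_{i0})_+$ and then represents $\omega_i$ via Duhamel's principle, $\omega_i(t,x)=\int_0^t h_i(\tau,t,x)\di{\tau}$, bounding each propagated slice $h_i$ separately. Your single-step comparison against the affine-in-time barriers $\psi_i(t)=\esssup u_{i0}+t\,\esssup F_i(\tilde u)$ and $\phi_i(t)=-t\,\esssup(F_i(\tilde u))_-$ reaches the same conclusion without invoking an evolution family for the non-autonomous operator with merely $L^\infty$ coefficients — which is the implicit (and not entirely free) ingredient behind the paper's Duhamel step. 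The two delicate points you flag, admissibility of $(w_i)_\pm$ as test functions and the identity $\langle\partial_t w_i,(w_i)_+\rangle=\tfrac12\frac{d}{dt}\|(w_i)_+\|_{L^2(\Omega)}^2$, are indeed the only technical prerequisites, and both hold since $\psi_i,\phi_i$ are smooth in $t$ and constant in $x$, so $u_i-\psi_i$ inherits the $W((0,s);H^1(\Omega))$ regularity from \Cref{existence_linear}. A small bonus of your version: it directly yields the bound with $\esssup F_i(\tilde u)$ as stated in the lemma, whereas the paper's Duhamel argument produces the slightly weaker constant $\esssup(F_i(\tilde u))_+$.
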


\begin{proof}
By the linearity of the problem, we can decompose the solution $u_i=\pi_i+\omega_i$, where 

\begin{minipage}[t]{0.5\textwidth}
\begin{alignat*}{2}
	\partial_t\pi_i-\dive\left(\widehat{D_i}(\tilde{u})\nabla \pi_i\right)&=0&\quad&\text{in}\ \ S\times\Omega,\\
	-\widehat{D_i}(\tilde{u})\nabla \pi_i\cdot n&=0&\quad&\text{on}\ \ S\times\partial\Omega,\\
	\pi_i(0)&=u_{i0}&\quad&\text{in}\ \ \Omega,
\end{alignat*}
\end{minipage}\vspace{.08cm}
\begin{minipage}[t]{0.5\textwidth}
\begin{alignat*}{2}
	\partial_t\omega_i-\dive\left(\widehat{D_i}(\tilde{u})\nabla \omega_i\right)&=F_i(\tilde{u})&\quad&\text{in}\ \ S\times\Omega,\\
	-\widehat{D_i}(\tilde{u})\nabla \omega_i\cdot n&=0&\quad&\text{on}\ \ S\times\partial\Omega,\\
	\omega_i(0)&=0&\quad&\text{in}\ \ \Omega.
\end{alignat*}
\end{minipage}
Estimating the $\pi_i$-problems via $(\pi_i-L_i)_+$ for $L_i=\esssup u_{i0}$, we find that $\pi_i\leq L_i$.
Using Duhamel's principle, we get $\omega_i(t,x)=\int_0^t h_i(\tau,t,x)\di{\tau}$ where the $\tau$-parametrized function $h_i$ solves

\begin{alignat*}{2}
	\partial_th_i-\dive\left(\widehat{D_i}(\tilde{u})\nabla h_i\right)&=0&\quad&\text{in}\ \ S\times\Omega,\\
	-\widehat{D_i}(\tilde{u})\nabla h_i\cdot n&=0&\quad&\text{on}\ \ S\times\partial\Omega,\\
	h_i(0)&=F_i(\tilde{u}(\tau,\cdot))&\quad&\text{in}\ \ \Omega.
\end{alignat*}
This implies $h_i\leq\esssup(F_i(\tilde{u}))_+$ and, as a consequence $\omega_i\leq t\esssup(F_i(\tilde{u}))_+$.
Finally, we have

$$
u_i\leq \esssup u_{i0}+t\esssup(F_i(\tilde{u}))_+.
$$
Now, since $u_{i0}\geq0$, we find that $\pi_i\geq0$ as well.
Testing with $(h_i+\esssup(F_i(\tilde{u}))_-)_-$, we arrive at $h_i\geq-\esssup(F_i(\tilde{u}))_-$ and, as a consequence $\omega_i\geq-t\esssup(F_i(\tilde{u}))_-$.
This shows
$$
u_i\geq-t\esssup(F_i(\tilde{u}))_-.
$$
In particular, we find that $u_i\in L^\infty((0,s)\times\Omega)$ with 
$$\|u_i(t)\|_{L^\infty(\Omega)}\leq \|u_{i0}\|_{L^\infty(\Omega)}+t\|F_i(\tilde{u})(t)\|_{L^\infty(\Omega)}.$$
\end{proof}
Now, in order to get concrete bounds for the solution $u=(u_1,...,u_N)$, we have to take a closer look at the right-hand sides:
For the $F_i(\tilde{u})$, we have the estimates (given our assumptions on $r_0$, $s^*$, and $M^*$ and using \cref{est_F1,est_F2,est_F3}):
\begin{align*}
F_i(\tilde{u})&\leq M\left(M\gamma\left(N+\frac{k+1}{2}\right)+15\left(a_i+\frac{a}{b}\beta_i(e^{bt}-1)\right)\right)+15\beta_iv_0(x),\\
F_i(\tilde{u})&\geq-M\left(M\gamma\left(N+\frac{k+1}{2}\right)+15\left(a_i+\frac{a}{b}\beta_i(e^{bt}-1)\right)\right)
\end{align*}
or, more compactly,

\begin{align}\label{estimate_rhs}
\|F_i(\tilde{u})(t)\|_{L^\infty(\Omega)}\leq15\beta_i\|v_0\|_{L^\infty(\Omega)}+M\left(M\gamma\left(N+\frac{k+1}{2}\right)+15\left(a_i+\frac{a}{b}\beta_i(e^{bt}-1)\right)\right).
\end{align}
With this estimate at hand, we are now able to establish that $\mathcal{L}$ is a self-mapping for a suitable choice of $(s,M)$.

\begin{lemma}[Fixed-point operator]
\label{lemma_fixed}
For any $M\in(0,M^*)$ there is $s\in(0,s^*)$ such that for every $\tilde{u}\in T_{s,M}$ the solution $u$ of the linearized problem also satisfies $u=\mathcal{L}(\tilde{u})\in T_{s,M}$. 
\end{lemma}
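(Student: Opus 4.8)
The plan is to exploit that membership in $T_{s,M}$ is nothing but the pointwise bound $\|u_i\|_{L^\infty((0,s)\times\Omega)}\le M$ for each $i$, so the whole statement reduces to an estimate that can be forced small by shrinking the time horizon $s$. Since $M<M^*$ and $s<s^*$, every $\tilde u\in T_{s,M}$ still produces a uniformly non-degenerate diffusivity $\widehat{D_i}(\tilde u)$ via \Cref{lemma_boundsr,lemma_diffus} (the condition \cref{eq:lemma_boundsr} being monotone in $M$ and $t$), so the solution operator $\mathcal{L}$ is well defined on $T_{s,M}$ and only the range condition needs checking; no circularity arises between that condition and the well-posedness of $\mathcal{L}$ precisely because the same $s\le s^*$ keeps us inside the regime of \Cref{lemma_boundsr}.

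Concretely, I would start from the pointwise estimate established in \Cref{lemma_bounded},
$$
\|u_i(t)\|_{L^\infty(\Omega)}\le\|u_{i0}\|_{L^\infty(\Omega)}+t\,\|F_i(\tilde u)(t)\|_{L^\infty(\Omega)}\qquad(t\in(0,s)).
$$
By the standing assumption on the initial data, $\|u_{i0}\|_{L^\infty(\Omega)}\le\nicefrac{M}{2}$, so it remains to absorb the second summand into the leftover $\nicefrac{M}{2}$. Here the uniform right-hand side bound \cref{eq:rhs} (assembled from \cref{est_F1,est_F2,est_F3} and displayed in \cref{estimate_rhs}) is decisive: for the fixed value $M$ there is a constant $C=C(s^*,M)$, \emph{independent of} $\tilde u$, with $\|F_i(\tilde u)\|_{L^\infty((0,s^*)\times\Omega)}\le C$ for all $\tilde u\in T_{s,M}$ and all $i$ whenever $s\le s^*$. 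Consequently $t\,\|F_i(\tilde u)(t)\|_{L^\infty(\Omega)}\le sC$ on $(0,s)$, and choosing
$$
s=\min\left\{\nicefrac{s^*}{2},\ \nicefrac{M}{2C}\right\}
$$
gives $\|u_i(t)\|_{L^\infty(\Omega)}\le\nicefrac{M}{2}+\nicefrac{M}{2}=M$ for every $t\in(0,s)$ and every $i$. Taking the supremum over $t$ yields $\|u_i\|_{L^\infty((0,s)\times\Omega)}\le M$, i.e. $u=\mathcal{L}(\tilde u)\in T_{s,M}$.

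The main obstacle is making the control of $F_i$ genuinely uniform over the \emph{entire} set $T_{s,M}$ rather than for a single $\tilde u$; this is exactly what \cref{eq:rhs} supplies, and it rests on the non-degeneracy $2r_{\tilde u}\le 1-\e_1$ from \Cref{lemma_boundsr} (so that the Robin prefactor $\nicefrac{2\pi r_{\tilde u}}{1-\pi r_{\tilde u}^2}$ stays bounded, cf.\ \cref{est_F2,est_F3}) together with the a priori $v_u$-bounds. The delicate point I would flag is the dependence of $C=C(s^*,M)$ on $M$: through the quadratic Smoluchowski term $R_i$ it grows essentially like $M^2$, so the admissible threshold $\nicefrac{M}{2C}$ decreases with $M$ and stays positive only because $C$ is finite for each fixed $M$. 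This is precisely why the lemma fixes $M$ first and then extracts a compatible $s$, and not the reverse, and it is also the reason the result is only local in time.
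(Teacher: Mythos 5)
Your proposal is correct and follows essentially the same route as the paper's own proof: both rest on the uniform-in-$\tilde u$ bound for $\|F_i(\tilde u)\|_\infty$ from \cref{eq:rhs}/\cref{estimate_rhs} combined with the pointwise estimate of \Cref{lemma_bounded} and the assumption $u_{i0}\le\nicefrac{M}{2}$, shrinking $s$ until $s\|F_i(\tilde u)\|_\infty\le\nicefrac{M}{2}$. Your version merely makes the choice of $s$ explicit and adds a (correct) discussion of why no circularity arises and why $M$ must be fixed before $s$.
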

\begin{proof}
For any given $M\in(0,M^*)$, we find that
$$
\lim_{t\to0}t\|F_i(\tilde{u})\|_\infty\to0\quad(i=1,...,N).
$$
uniformly for $\tilde{u}\in T_{s,M}$ (see inequality \ref{estimate_rhs}).
As a consequence, it is possible to find $s\in(0,s^*)$ such that $s\|F_i(\tilde{u})\|_\infty\leq\nicefrac{M}{2}$ for all $i=1,...,N$ and for all $\tilde{u}\in T_{s,M}$.
This implies $u\in T_{s,M}$ via \Cref{lemma_bounded}.
\end{proof}

Please note that $T_{s,M}$ is a closed subset of $L^2((0,s)\times\Omega)$. 
In the following lemma we investigate continuity of the fixed point operator

\begin{lemma}[Continuity]
\label{lemma_cont}
The operator 
$$
\mathcal{L}\colon T_{s,M}\to L^2((0,s)\times\Omega)
$$
is continuous with respect to the $L^2$-norm.
\end{lemma}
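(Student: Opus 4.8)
The plan is to prove \emph{sequential} continuity: given $\tilde{u}^{(n)}\to\tilde{u}$ in $L^2((0,s)\times\Omega)^N$ with all iterates in $T_{s,M}$, I would show that the solutions $u^{(n)}=\mathcal{L}(\tilde{u}^{(n)})$ converge in $L^2$ to $\mathcal{L}(\tilde{u})$. Since $T_{s,M}$ is bounded in $L^\infty$, the a priori estimate of \Cref{existence_linear} bounds $\{u^{(n)}\}$ uniformly in $W((0,s);H^1(\Omega))^N$, independently of $n$. By the Aubin--Lions lemma I can extract a subsequence (not relabeled) with $u^{(n)}\to u^*$ strongly in $L^2((0,s)\times\Omega)^N$, $\nabla u^{(n)}\rightharpoonup\nabla u^*$ weakly in $L^2$, and $\partial_t u^{(n)}\rightharpoonup\partial_t u^*$ weakly in $L^2((0,s);H^1(\Omega)^*)^N$.

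Next I would establish convergence of all data built from $\tilde{u}^{(n)}$. From the explicit representation of $v$ (Step $(a)$) the map $\tilde u\mapsto v_{\tilde u}$ is linear and $L^2$-continuous, so $v_{\tilde u^{(n)}}\to v_{\tilde u}$ in $L^2$. Applying \Cref{lem_rad} with the spatial $L^2$-norm (Minkowski in time) yields $\sup_t\|r_{\tilde u^{(n)}}(t)-r_{\tilde u}(t)\|_{L^2(\Omega)}\to0$, hence $r_{\tilde u^{(n)}}\to r_{\tilde u}$ in $L^2((0,s)\times\Omega)$; passing to a further subsequence I get a.e.\ convergence. Since all radii stay in $[\e_2,\tfrac12(1-\e_1)]$ by \Cref{lemma_boundsr}, the porosity $\phi(\cdot)$ and the effective integrals $\int_{Y\setminus\overline{B(r)}}(\nabla w+e_k)\cdot e_j$ depend continuously on $r$ (Lipschitz by \Cref{lem_trans}); hence $\widehat{D_i}(\tilde u^{(n)})\to\widehat{D_i}(\tilde u)$ a.e. With the uniform bound of \Cref{lemma_diffus} and dominated convergence this gives $\widehat{D_i}(\tilde u^{(n)})\to\widehat{D_i}(\tilde u)$ strongly in $L^p((0,s)\times\Omega)$ for every $p<\infty$. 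The reaction term $R_i$ is quadratic and the exchange coefficient $\tfrac{2\pi r}{1-\pi r^2}$ is Lipschitz on the admissible $r$-range, so by the same reasoning $F_i(\tilde u^{(n)})\to F_i(\tilde u)$ strongly in $L^2$.

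Then I would pass to the limit in the weak form of \cref{lina,linb,linc}. The time-derivative and source terms pass by weak--strong pairing. The quasilinear diffusion term is the main obstacle: $\nabla u_i^{(n)}$ converges only weakly in $L^2$, while the coefficient $\widehat{D_i}(\tilde u^{(n)})$ converges strongly only in $L^p$ (not in $L^\infty$). I would resolve this using the symmetry of $\widehat{D_i}$ from \Cref{lemma_diffus}: writing $\int\widehat{D_i}(\tilde u^{(n)})\nabla u_i^{(n)}\cdot\nabla\varphi=\int\nabla u_i^{(n)}\cdot\big(\widehat{D_i}(\tilde u^{(n)})\nabla\varphi\big)$ and noting that $\widehat{D_i}(\tilde u^{(n)})\nabla\varphi\to\widehat{D_i}(\tilde u)\nabla\varphi$ strongly in $L^2$ (dominated convergence from the a.e.\ convergence, the $L^\infty$ bound, and $\nabla\varphi\in L^2$) lets the weak--strong product converge to $\int\widehat{D_i}(\tilde u)\nabla u_i^*\cdot\nabla\varphi$. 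Thus $u^*$ is a weak solution of the linearized problem for the limit data $\tilde u$, and the uniqueness statement of \Cref{existence_linear} forces $u^*=\mathcal{L}(\tilde u)$.

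Finally, a standard subsequence principle closes the argument: every subsequence of $\{\mathcal{L}(\tilde u^{(n)})\}$ admits a further subsequence converging in $L^2$ to the single limit $\mathcal{L}(\tilde u)$, so the whole sequence converges and $\mathcal{L}$ is $L^2$-continuous. I would stress that a direct quantitative (Lipschitz) estimate in the $L^2$-norm seems unavailable here, because the coefficient map $\tilde u\mapsto\widehat{D_i}(\tilde u)$ is not continuous from $L^2$ into $L^\infty$ (radius differences cannot be controlled in $L^\infty$ by $L^2$-data), while an energy estimate for the difference $u^{(1)}-u^{(2)}$ would require exactly such $L^\infty$-control paired against the merely $L^2$-integrable gradient $\nabla u^{(2)}$; the compactness route sidesteps this difficulty precisely through the weak--strong pairing above.
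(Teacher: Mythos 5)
Your proposal is correct and follows essentially the same route as the paper: uniform a priori bounds from \Cref{existence_linear}, Aubin--Lions compactness with a.e.\ convergence along a subsequence, identification of the limit in the weak formulation using the Lipschitz dependence of $\widehat{D_i}$ on the radius (via \Cref{lem_rad,lem_trans}) together with dominated convergence, and the uniqueness-plus-subsequence principle. The only cosmetic difference is that you move the coefficient onto $\nabla\varphi$ by symmetry and use a single weak--strong pairing, whereas the paper splits the diffusion term into a weak-convergence part and a coefficient-difference part; both variants are valid.
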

\begin{proof}
Now let $\tilde{u},\tilde{u}^{(k)}\in T_{s,M}$ such that $\tilde{u}^{(k)}\to \tilde{u}$ in $L^2((0,s)\times\Omega)$ for $k\to\infty$.
In addition, let $u=\mathcal{L}(\tilde{u})$ and  $u^{(k)}=\mathcal{L}(\tilde{u}^{(k)})$ ($k\in\N)$ be the corresponding unique solutions to the linearized problem (see \Cref{existence_linear}).

Now, the sequence $u^{(k)}$ is bounded in $W((0,s);H^1(\Omega))$ since $0\leq\tilde{u}^{(k)}\leq M$ and the a priori estimates given by \Cref{existence_linear}.
Since $W((0,s);H^1(\Omega))$ is a reflexive Banach space and since it is compactly embedded in $L^2((0,s)\times\Omega)$ (\emph{Lions-Aubin lemma}), there is a subsequence (for ease of notation, still denoted by $u^{(k)}$) and a limit function $u^*$ such that $u^{(k)}$ converges to $u^*$ strongly and weakly in $L^2((0,s)\times\Omega)$ and $W((0,s);H^1(\Omega))$, respectively.
Without loss of generality, we also have $u^{(k)}\to u$ pointwise almost everywhere over $(0,s)\times\Omega$ (possibly by choosing a further subsequence).
In the following, we show continuity by establishing that $u^*=u$.\footnote{Due to this resulting statement: \emph{Every subsequence has a further subsequence converging to $u$.}}

The components of $u^{(k)}$ satisfy (for all $\varphi\in H^1(\Omega)$ and $t\in(0,s)$)
\begin{align*}
	\langle\partial_tu_i^{(k)},\varphi\rangle_{H^1(\Omega)^*}+\int_\Omega\widehat{D_i}(\tilde{u}^{(k)})\nabla u^{(k)}\cdot\nabla\varphi\di{x}&=\int_\Omega F_i(\tilde{u}^{(k)})\varphi\di{x}.
\end{align*}
Now, since $\tilde{u}^{(k)}\to \tilde{u}$ in $L^2((0,s)\times\Omega))$, it holds

$$
\int_\Omega F_i(\tilde{u}^{(k)})\varphi\di{x}\to\int_\Omega F_i(\tilde{u})\varphi\di{x} \quad (\varphi\in H^1(\Omega),\, i=1,..,N).
$$
For the diffusion term, we take a look at

\begin{multline*}
    \int_\Omega\left(\widehat{D_i}(\tilde{u})\nabla u^*-\widehat{D_i}(\tilde{u}^{(k)})\nabla u^{(k)}\right)\cdot\nabla\varphi\di{x}\\
    =\int_\Omega\widehat{D_i}(\tilde{u})\nabla\left(u^*-u^{(k)}\right)\cdot\nabla\varphi\di{x}
    +\int_\Omega\left(\widehat{D_i}(\tilde{u})-\widehat{D_i}(\tilde{u}^{(k)})\right)\nabla u^{(k)}\cdot\nabla\varphi\di{x}.
\end{multline*}
Here, the first term on the right hand side goes to zero due to the weak convergence of $u^{(k)}$ to $u^*$ in $W((0,s);H^1(\Omega))$.
Looking at the second term, we recall

\begin{multline*}
\left(\widehat{D_i}(\tilde{u})-\widehat{D_i}(\tilde{u}^{(k)})\right)_{lm}\\
=d_i\left(\phi(r^{(0)})\int_{Y^{(0)}}(\nabla w^{(0)}_{l}+e_{l})\cdot e_m\di{z}-\phi(r^{(k)})\int_{Y^{(k)}}(\nabla w^{(k)}_{l}+e_{l})\cdot e_m\di{z}\right),
\end{multline*}
which can be estimated using \Cref{lem_rad,lem_trans} 

$$
\left|\widehat{D_i}(\tilde{u})-\widehat{D_i}(\tilde{u}^{(k)})\right| 
\leq C\int_0^t\left(\left|\tilde{u}-\tilde{u}^{(k)}\right|+\int_0^\tau e^{bs}\left|\tilde{u}-\tilde{u}^{(k)}\right|\di{s}\right)\di{\tau}.
$$
Here, we have used for the porosity that

$$
\left|\phi(r^{(0)})-\phi(r^{(k)})\right|\leq\frac{\pi^2}{|\Omega|}\left|r^{(0)}-r^{(k)}\right|.
$$
Now, since $\tilde{u}^{(k)}\to\tilde{u}$ almost everywhere over $(0,s)\times\Omega$, dominated convergence leads to 

$$
\int_\Omega\left(\widehat{D_i}(\tilde{u})-\widehat{D_i}(\tilde{u}^{(k)})\right)\nabla u^{(k)}\cdot\nabla\varphi\di{x}\to0
$$
As a consequence, $u^*=u$.
\end{proof}


\begin{theorem}[Existence]\label{existence}
The operator 
$$
\mathcal{L}\colon T_{s,M}\to L^2((0,s)\times\Omega)
$$
has at least one fixed-point $u^*\in W((0,s);H^1(\Omega))$.
\end{theorem}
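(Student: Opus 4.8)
The plan is to verify that the operator $\mathcal{L}$ satisfies the hypotheses of Schauder's fixed-point theorem on the set $T_{s,M}$, viewed as a subset of the Banach space $L^2((0,s)\times\Omega)^N$. Concretely, Schauder's theorem requires a nonempty, convex, closed subset $K$ of a Banach space together with a continuous self-map $\mathcal{L}\colon K\to K$ whose image $\mathcal{L}(K)$ is relatively compact. Almost all of this has already been prepared in the preceding lemmas, so the proof reduces to collecting the individual ingredients and invoking the theorem.

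First I would fix, for the chosen $M\in(0,M^*)$, a time horizon $s\in(0,s^*)$ as provided by \Cref{lemma_fixed}, so that $\mathcal{L}[T_{s,M}]\subset T_{s,M}$; this makes $\mathcal{L}$ a genuine self-map. Next I would record that $T_{s,M}$ is nonempty (it contains, e.g., the zero function), convex (each constraint $\|u_i\|_\infty\leq M$ is convex), and closed in $L^2((0,s)\times\Omega)^N$, as already noted in the text. Continuity of $\mathcal{L}$ with respect to the $L^2$-topology is exactly the content of \Cref{lemma_cont}.

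The remaining ingredient is compactness of the image, and this is where the structure of the argument pays off. By the a priori estimate of \Cref{existence_linear}, the set $\mathcal{L}(T_{s,M})$ is bounded in $W((0,s);H^1(\Omega))^N$ with a bound independent of the particular $\tilde{u}\in T_{s,M}$. Since $W((0,s);H^1(\Omega))$ embeds compactly into $L^2((0,s)\times\Omega)$ by the Lions--Aubin lemma, the image $\mathcal{L}(T_{s,M})$ is relatively compact in $L^2((0,s)\times\Omega)^N$. Thus all hypotheses of Schauder's fixed-point theorem are met, and there exists $u^*\in T_{s,M}$ with $\mathcal{L}(u^*)=u^*$. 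Because $\mathcal{L}$ takes values in $W((0,s);H^1(\Omega))^N$, the fixed point automatically inherits this regularity, i.e. $u^*\in W((0,s);H^1(\Omega))^N$.

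I do not expect a genuine obstacle at the level of the theorem itself: the difficult analysis was already carried out in establishing the self-mapping property (which forced $s$ to be small) and especially the continuity of $\mathcal{L}$, which was nontrivial because the diffusivity depends on $\tilde{u}$ non-locally, through the radius ODE and the cell problem, and had to be controlled via \Cref{lem_rad,lem_trans}. The only point requiring care here is to apply the correct version of Schauder's theorem, namely the formulation on a closed convex set together with relative compactness of the image, rather than compactness of $K$ itself, since $T_{s,M}$ is not compact in $L^2$. A fixed point finally yields a weak solution of the full nonlinear problem \cref{nonlinear-1,nonlinear-2,nonlinear-3}, because $\mathcal{L}(u^*)=u^*$ means precisely that $u^*$ solves the linearized system with the frozen coefficient $\tilde{u}=u^*$.
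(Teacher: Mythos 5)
Your proof is correct and follows essentially the same route as the paper: Schauder's theorem applied to $\mathcal{L}$ on the nonempty, closed, convex set $T_{s,M}\subset L^2((0,s)\times\Omega)^N$, with the self-mapping property from \Cref{lemma_fixed}, continuity from \Cref{lemma_cont}, and relative compactness of the image via the Lions--Aubin embedding. If anything, you are slightly more careful than the paper in noting that relative compactness requires a \emph{uniform bound} in $W((0,s);H^1(\Omega))$ coming from the a priori estimate of \Cref{existence_linear}, not merely containment in that space.
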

\begin{proof}
$T_{s,M}$ is a non-empty, closed, and convex subset of $L^2((0,s)\times\Omega)$ and $\mathcal{L}$ is continuous with respect to the $L^2((0,s)\times\Omega)$ norm (\Cref{lemma_cont}).
Moreover, we have $\mathcal{L}[T_{s,M}]\subset T_{s,M}$ via \Cref{lemma_fixed}. 
Finally, since $\mathcal{L}[T_{s,M}]\subset W((0,s);H^1(\Omega))$ which is compactly embedded in $L^2((0,s)\times\Omega)$ by virtue of Lions-Aubin's lemma, we can employ Schauder's fixed point thorem to conclude the existence of at least one fixed-point $u^*\in W((0,s);H^1(\Omega))\cap T_{s,M}$.
\end{proof}

\begin{remark}
Relying for instance on techniques from \cite{degenerate}, we expect the weak solution given by \Cref{existence} to be of higher regularity provided that data (boundary of $\Omega$, initial conditions) are sufficiently smooth.
This could change, however, if we were to allow actual clogging of the porous medium.
\end{remark}



\section{Numerical simulation of the two-scale quasilinear  problem}\label{numerics}

\subsection{Setup of the model equations and target geometry}

The aim is to solve numerically the two-dimensional macroscopic model problem
for the species concentration $u_i$ ($i\in\{1,\dots,N\}$) and  $v$. To focus the attention on physically relevant choices of parameters, we use the setup described in \cite{johnson1995dynamics}; see also \cite{Krehel,MC20} for more details. Essentially, we look at a theoretical model describing the dynamics of colloid
deposition on collector surfaces,  when  both inter-particle, and particle-surface electrostatic interactions   are assumed to be negligible. The numerical range of the used parameters fit to the situations that can relate to the immobilization of bio-colloids in soils. 

The simulation output we are looking after includes approximated space and time concentration profiles of colloidal populations, spatial distribution of microstructures for given time slices, and  estimated amount of deposited colloidal mass. This information helps us detect in {\em a posteriori} way the locations in $\Omega$ where deposition-induced clogging is likely to happen.


We have
\bse\label{mod1}
\be
\hspace{-1cm}{ \partial_t} u_i(x,t) = D_{ijk}(x,t) \Delta_x u_i(x, t)
+ R_i(u)\nonumber
 -\frac{{L}(x,t)}{A(x,t)} \left(a_i u_i(x,t)-\beta_i v(x,t)\right),\label{e1}
\ee
describing the diffusion of $u_i$ in the macroscopic domain $\Omega$.

The effective diffusion tensor has the form  
 $$D_{ijk}(x,t)= d_i \phi(x,t)\tau_{jk}(x,t),$$ where the entries
  $$\tau_{jk}(x,t)=\int_{Y(x,t)} \left( \delta_{j,k}+\nabla_{y_j}w_k(z,t) \right) dz,$$ 
    for all $i=1,\ldots,N$, $j,k=1,2$.

 In addition, the length $L$ and area $A$ functions related to the motion of the boundary (for $r<1/2$) are:
   \be \label{mod1ha}
{L}(x,t)=\int_{\Gamma(x,t)}ds=2\pi r(x,t),\quad
A(x,t)=\int_{ Y_0(x,t)}dy=1-\pi r^2(x,t),\quad \mbox{(in 2D)}
\ee
\be \label{mod1hb}
R_i(u)=\frac12\sum_{i+j=k}\alpha_{i,j}\beta_{i,j} u_i u_j -
 u_k\sum_{i=1}^\infty \alpha_{k,i}\beta_{k,i} u_i.
\ee

Moreover, the cell functions $w:=(w_1(x,y,t),w_2(x,y,t))$, assumed to have constant mean, satisfy
 \be \label{mod1hc}
-\Delta_y w_i=0,\quad i=1,2 \quad \mbox{in}\quad  Y_0(x,t),\\
\ee
\be \label{mod1hd}
 - n_0(x,t)\cdot\nabla_y w_i=0, \quad \mbox{on}\quad \partial Y,\quad
 - n_0(x,t)\cdot\nabla_y w_i= n_i(x,t), \quad \mbox{on}\quad \partial B(r).
\ee
with  $\Gamma_e:=\partial Y$ being the boundary of the cell $n_0(x,t)=(n_1(x,t),n_2(x,t))$
 is the corresponding normal vector. 
 
Equation (\ref{e1})
needs to be complemented with corresponding initial and boundary conditions. In the sequel of this section, we focus the discussion on the case of a two dimensional macroscopic domain, i.e. $x=(x_1,x_2)\in[0,1]\times [0,1]$. 

We set Robin conditions at the one side of the square
 \be \label{mod1b}
  \frac{\partial u_i}{\partial n}(x_1,0,t)+ b_r u_i (x_1,0,t)=\left\{\begin{array}{cc}
 u_i^b(x_1)>0 & t\in [0,t_0],\\
 0         & t>t_0,
 \end{array}\right., \quad x_1\in [0,1],
 \ee
while we impose Neumann boundary conditions for the rest of the boundary 
\be \label{mod1b2}
 \frac{\partial u_i}{\partial n}(x_1,x_2, t)=0,
\ee
for $(x_1,x_2)$ such that $0\leq x_2\leq 1$ with   
$x_1=0,1$ or $0\leq x_1\leq 1$ with $x_2=0$
 and with initial conditions
\be \label{mod1c}
  u_i(x,0)=u_i^a(x)\geq 0.
  \ee
Moreover, we have
\be\label{mod1av}
{ \partial_t} v (x, t)=  \sum_{i=1}^N \alpha_i u_i(x,t)-\beta v(x,t),
\ee
 with some initial condition
  \be \label{mod1bv}
  v(x,0)= v_a(x)\geq 0,
  \ee 
 and  
\be  
 r(x,t)\, { \partial_t} r (x,t)=
 \alpha\left( \sum_{i=1}^N a_i u_i(x,t)-\beta  v(x,t)\right)
 {L}(x,t), \label{mod1R}
\ee
together with some initial distribution
\be \label{mod2R}
  r(x,0)=r_a(x)>0, 
\ee
 \ese
 for $ x\in [0,1]\times [0,1]$.
  We discuss in Section  \ref{simulation} additional choices of suitable initial and boundary conditions. 

\subsection{Discretization schemes}\label{simulation}
To treat   problem \eqref{mod1} numerically, we need  
to  obtain firstly a   numerical approximation for the cell problems  \eqref{mod1hc} and determine
 the shape of the corresponding cell functions  $w_1,w_2$ posed in $Y_0(x,t)$. 
 
 More specifically, we proceed for the various values of  $r$,  for $r_a\leq r(x,t) \leq 1/2$. We  take a partition of width $\delta r$, $r_a=r_0, r_1=r_0+\delta r,\ldots, r_{M_1}=1/2$.

 Then since $ Y_0$ is determined as the area contained inside the square cell and outside  the circle of radius $r$,  we obtain a sequence of solutions for the cell problem \eqref{mod1hc} for each 
$ {Y_0}_i$ corresponding to the radius $r_i$ of the partition. 

 We  use a finite element scheme to solve these cell problems. To be precise, we use the MATLAB finite element  package ''\texttt{Distmesh}" (see details in \cite{Persson}) to triangulate the domain ${ Y_0}_i= Y_0(r_i)$. Furthermore, a solver has been implemented to handle this specific problem (equations \eqref{mod1hc}); it works in a similar fashion as applied in \cite{MC20}. 

In Figure \ref{Figw1}, we illustrate the numerical solution for this problem  for a particular choice of $r_i$.
Specifically, we choose to look at $r_i=.25$.
\begin{figure}[htb]
\vspace*{-2cm}
\begin{center}
\includegraphics[bb= 330 230 250 600, scale=.9]{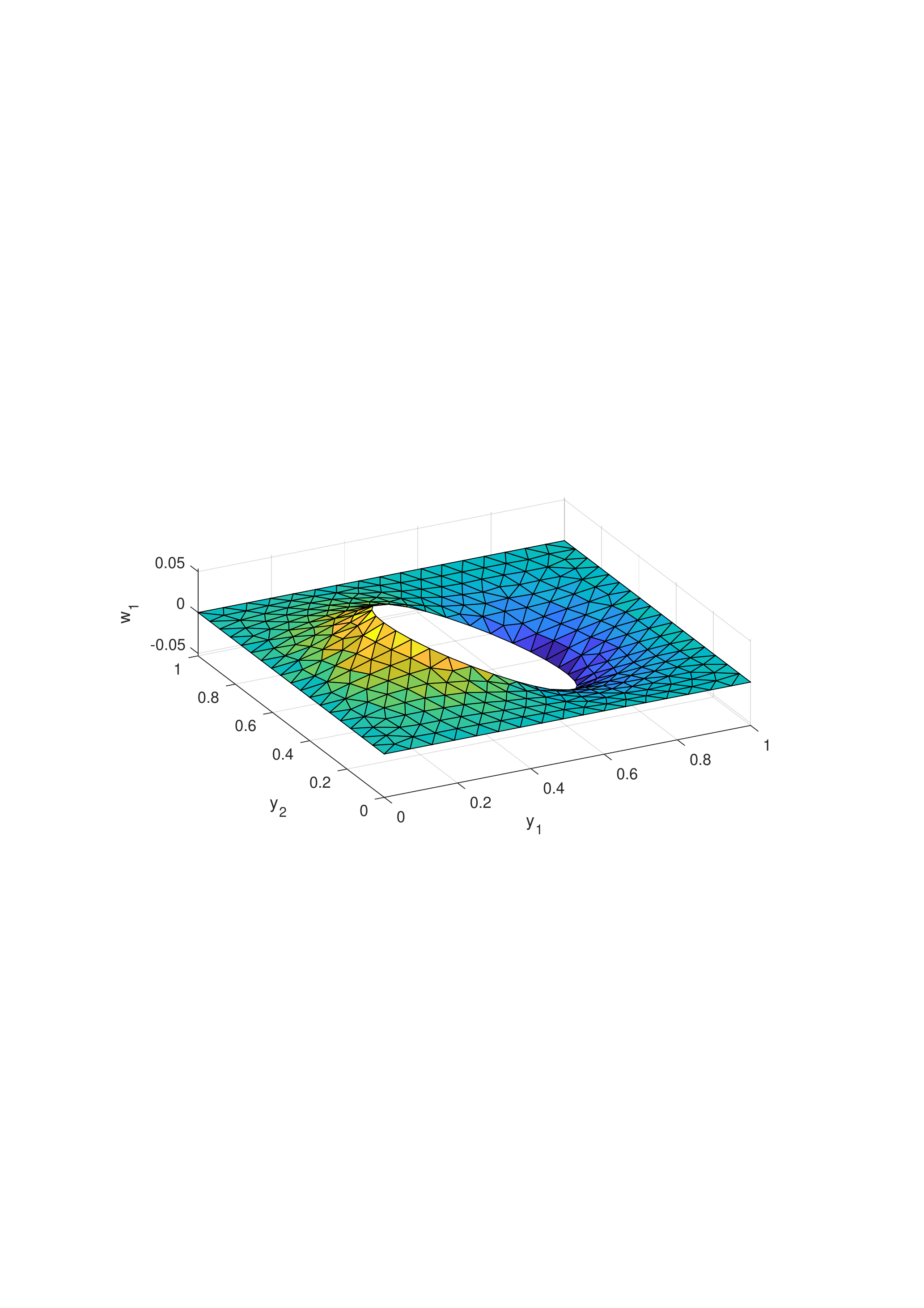}
 \vspace*{-2cm}
\end{center}
\caption{\it Numerical solution of the cell problem \eqref{mod1hc} and specifically for $w_1$ with $r_i=.25$.}\label{Figw1}
\end{figure}

 Having available the numerical evaluation of the cell functions $w$ as approximate solutions to  the cell problems \eqref{mod1hc} and \eqref{mod1hd}, the entries of the diffusion tensor  
 $D_{ijk}=\int_{ Y_0(x,t)} d_i \left( \delta_{j,k}+\nabla_{y_j}w_k \right)$, $i=1,\ldots,N$, $j,k=1,2$ 
 can be calculated directly and for each $(x,t)$ and consequently for the corresponding value for $r(x,t)$ and thus for  $ Y_0(x,t)$. Then the corresponding value of $D_{ijk}(x,t)$ is approximated via linear interpolation.


Next, we solve the system of equations \eqref{e1}-\eqref{mod2R}. 
We use a finite difference scheme to solve the  two-dimensional version of the field equation \eqref{e1},
 together with its boundary and initial conditions. 
 More specifically we consider a square domain $\Omega =[0,1]\times [0,1]$.

For this purpose we implement a forward finite difference scheme and for this purpose initially we 
 consider a uniform partition of the domain $\Omega$, with $x=(x_1,\,x_2)\in \Omega$, $0\leq x_1\leq 1$,
$0\leq x_2\leq 1$, of $(M+1)\times (M+1)$ points with spacial step $\delta x_1=\delta x_2=\delta x$,  with
${x_1}_{\ell_1}={\ell_1} \delta x$, ${\ell_1}=0,1,\ldots M$,  ${x_2}_{\ell_2}={\ell_2} \delta x$, ${\ell_2}=0,1,\ldots M$. 

Additionally, we take a partition of $N_T$ points in the time interval $[0,T]$, where $T$ is the maximum time of the simulation, with step $\delta t$ and $t_n=n\delta t$, $i=0,\ldots N_T-1$.

Let ${U_i}_{{\ell_1},{\ell_2}}^n$ the numerical approximation of the species $i$ of the solution of equation \eqref{e1}
 at the point $({x_1}_{\ell_1},{x_2}_{\ell_2},t_n )$ of $\Omega_T=\Omega\times [0,T]$, that is $u_i({x_1}_{\ell_1},{x_2}_{\ell_2},t_n )\simeq {U_i}_{{\ell_1},{\ell_2}}^n$.
Moreover we denote by ${\mathrm{D}_i}_{{\ell_1},{\ell_2}}^n$ the corresponding approximation of the diffusion coefficients $D_{ijk}({x_1}_{\ell_1},{x_2}_{\ell_2},t_n )\simeq { \mathrm{D}_i}_{{\ell_1},{\ell_2}}^n$ and 
similarly by ${V_i}_{{\ell_1},{\ell_2}}^n$ the  approximation for the species $v$,
$v({x_1}_{\ell_1},{x_2}_{\ell_2},t_n )\simeq {V}_{{\ell_1},{\ell_2}}^n$.

\paragraph{\bf Finite difference scheme for the model equations.}

Initially we focus on the appropriate  discretization of the terms in \eqref{e1}.
For the spatial derivatives 
$\frac{\partial }{\partial x_s}\left( D_i(x,t)\frac{\partial u_i }{\partial x_s}\right)$, 
where $s=1,2$ we apply a discretization of the form 
\begin{eqnarray*}
&\hspace{-.5cm}\frac{\partial }{\partial x_1}\left( {D_i}(x,t)\frac{\partial u_i}{\partial x_1}\right)\simeq
\mathtt{\Delta} \left(u_i( {D_i} {u_i}_{x_1})\right)_{x_1}:=
\frac{1}{\delta x}\left[  {\mathrm{D}_i}_{{\ell_1}+\frac12,{\ell_2}}^n \left( \frac{{U_i}_{{\ell_1}+1,{\ell_2}}^n - {U_i}_{{\ell_1},{\ell_2}}^n }{\delta x}\right) 
- {\mathrm{D}_i}_{{\ell_1}-\frac12,{\ell_2}}^n \left( \frac{{U_i}_{{\ell_1},{\ell_2}}^n- 
{U_i}_{{\ell_1}-1,{\ell_2}}^n }{\delta x}\right) 
\right]\\
&\hspace{-.5cm}\frac{\partial }{\partial x_2}\left(  {{D}_i}(x,t)\frac{\partial }{\partial x_2}\right)\simeq
\mathtt{\Delta} \left(u_i( {{D}_i} {u_i}_{x_2})\right)_{x_2}:=
\frac{1}{\delta x}\left[  {\mathrm{D}_i}_{{\ell_1},{\ell_2}+\frac12}^n \left( \frac{{U_i}_{{\ell_1},{\ell_2}+1}^n - {U_i}_{{\ell_1},{\ell_2}}^n }{\delta x}\right) 
- {\mathrm{D}_i}_{{\ell_1},{\ell_2}-\frac12}^n \left( \frac{{U_i}_{{\ell_1},{\ell_2}}^n- {U_i}_{{\ell_1},{\ell_2}-1}^n }{\delta x}\right) 
\right]\\
&\hspace{-.5cm} {\mathrm{D}_i}_{{\ell_1}+\frac12,{\ell_2}}=
\frac{ {\mathrm{D}_i}_{{\ell_1}+1,{\ell_2}}+  {\mathrm{D}_i}_{{\ell_1},{\ell_2}}}{2}, \quad 
 {\mathrm{D}_i}_{{\ell_1}-\frac12,{\ell_2}}=
 \frac{ {\mathrm{D}_i}_{{\ell_1},{\ell_2}}+  {\mathrm{D}_i}_{{\ell_1}-1,{\ell_2}}}{2}, \ \quad
 \\
&\hspace{-.5cm} {\mathrm{D}_i}_{{\ell_1},{\ell_2}+\frac12}=
\frac{ {\mathrm{D}_i}_{{\ell_1},{\ell_2}+1}+  {\mathrm{D}_i}_{{\ell_1},{\ell_2}}}{2}, \quad 
 {\mathrm{D}_i}_{{\ell_1},{\ell_2}-\frac12}=\frac{ {\mathrm{D}_i}_{{\ell_1},{\ell_2}}+  {\mathrm{D}_i}_{{\ell_1},{\ell_2}-1}}{2}. \quad 
\end{eqnarray*}

Moreover we use a standard forward in time discretization for the time derivative
 and we conclude with a finite difference scheme of the form
for the species $u_i$'s,
\begin{eqnarray*}
{U_i}_{{\ell_1},{\ell_2}}^{n+1}={U_i}_{{\ell_1},{\ell_2}}^{n}
+\delta t\, \mathtt{\Delta} \left(U_i(D {u_i}_{x_1})\right)_{x_1}
+\delta t\,\mathtt{\Delta} \left(U_i(D {u_i}_{x_1})\right)_{x_1}
+\delta t  {R_i}_{{\ell_1},{\ell_2}}^{n}
-\delta t  {F}_{{\ell_1},{\ell_2}}^{n}
\end{eqnarray*}
and for the species $v$
\begin{eqnarray*}
V_{{\ell_1},{\ell_2}}^{n+1}={V}_{{\ell_1},{\ell_2}}^{n}+\delta t \sum_{i=1}^N \alpha_i {U_i}_{{\ell_1},{\ell_2}}^{n}-\beta V_{{\ell_1},{\ell_2}}^{n},
\end{eqnarray*}
where
\begin{eqnarray*}
 {R_i}_{{\ell_1},{\ell_2}}^{n}=
 \frac12\sum_{p+q=s}\alpha_{p,q}\beta_{p,q} {U_p}_{{\ell_1},{\ell_2}}^{n} {U_p}_{{\ell_1},{\ell_2}}^{n} -
 {U_s}_{{\ell_1},{\ell_2}}^{n}\sum_{p=1}^\infty {\alpha_{s,p}}\beta_{s,p} {U_p}_{{\ell_1},{\ell_2}}^{n},
 \end{eqnarray*}
 and
\begin{eqnarray*}
 {F}_{{\ell_1},{\ell_2}}^{n}=\frac{{L}_{{\ell_1},{\ell_2}}^{n}}{A_{{\ell_1},{\ell_2}}^{n}}\left(a_i {U_i}_{{\ell_1},{\ell_2}}^{n}-\beta_i V_{{\ell_1},{\ell_2}}^{n}\right),
\end{eqnarray*}
are the approximations of the source terms at the point $({x_1}_{\ell_1},{x_2}_{\ell_2},t_n )$.

 In addition, the functions for the length $L(r)$ and for the area $A(r)$,  are approximated, for $r\leq 1/2$  by the relations:
 \begin{eqnarray*}
{L}_{{\ell_1},{\ell_2}}^{n}=2\pi r_{{\ell_1},{\ell_2}}^{n},\quad
A_{{\ell_1},{\ell_2}}^{n}=1-\pi (r_{{\ell_1},{\ell_2}}^{n})^2,\quad \mbox{(in 2D)}.
\end{eqnarray*}


Furthermore, we have  the approximate value $r_{{\ell_1},{\ell_2}}^{n}$ of the radius $r$ given by
\begin{eqnarray*}
r_{{\ell_1},{\ell_2}}^{n+1}=r_{{\ell_1},{\ell_2}}^{n}
+ \delta t\frac{1}{r_{{\ell_1},{\ell_2}}^{n}}\alpha
\left( \sum_{i=1}^N a_i {U_i}_{{\ell_1},{\ell_2}}^{n}-\beta  V_{{\ell_1},{\ell_2}}^{n}\right) {L}_{{\ell_1},{\ell_2}}^{n}.
\end{eqnarray*}

\subsection{Basic simulation output}

In the first set of simulations we consider homogeneous Neumann boundary conditions at the three edges of the square $\Omega$, namely at $x_1=0, x_1=1$ for $0\leq x_2\leq 1$ and at $x_2=1$, $0\leq x_1\leq 1$.

At the edge $x_2=0$, $0\leq x_1\leq 1$ we impose Robin boundary conditions given by equation \eqref{mod1b}. That is  we consider a scenario  of having inflow at this side of $\Omega$ for a particular time period, $[0,t_0]$ which stops after some time $t_0$, and we want mainly to observe the deposition process of the colloid species around the solid cores of the cells. The later can be apparent by the variation in time of the radius $r$.  

We take zero distributions as initial conditions ($t=0$) for the colloidal populations, while we consider various specific initial distributions for the radius $r$. 

We consider $N=3$  mobile species $u_i$ and one immobile species $v$. Our model needs a quite large number of parameters. We take them as follows: 
$\kappa=1,\,\, 
(d_1,\,d_2,\,d_3)=(.3,.5,.99)$,\,\,
$(a_1,\,a_2,\,a_3)=(.9,.5,.3)$,\,\,
$(\beta_1,\,\beta_2,\,\beta_3)=(1,1,1)$, 
$\alpha_{i,j}=.1,\,\,\beta_{i,j}=100$, $i,j=1,\ldots 3$, $u_a^i(x)=0,\,\,v_a(x)=0,\,\, r_a(x)=.05 ,\,\,
0\leq x\leq 1$.  

Regarding the choice of boundary condition at $(x_1,0)$, we take the function $u_i^b$ to be defined as 
\[(u_1^b,\,u_2^b,\,u_3^b)=({u_1^b}_0 x_1(i-x_1),0,0)\]
with ${u_1^b}_0=25$ for $t\in [0,t_0]$ and zero for $t>t_0$, with $t_0=2$.
Moreover, we let  $b_r=0.5$, $v(x_1,x_2,0)=0$, and  $r(x_1,x_2,0)=0.1$.

In addition, we take as final  simulation time $T=3$ and set the remaining parameters to be $M=41$, $\mathrm{R}:=\delta t/\delta x^2=0.2$.

\paragraph{Approximated concentration profiles.}


In the first of the following graphs, i.e. in Figure \ref{Figu1}, concentration profiles of the colloidal population $u_1$ are plotted against space. Similar profiles are exhibited by the other colloidal populations as well. As general rule, we keep the discussion about what happens with $u_1$ only as here the effects are more visible. This corresponds also to the physical situation when most of the mass is contained in the monomer population, while the amount of observable dimer, trimer, 4-mer populations is considerably lower; see e.g. \cite{Krehel} and references cited therein.

In the first two frames we have $t<t_0$; hence we can see that there is an inflow in $\Omega$ through  one edge and so we can observe the diffusion of $u_1$ taking place in the   $x_2$ direction. In the last two frames  taken at times after $t_0$ (hence here the inflow has stopped) we see that  the concentration of $u_1$ near the edge drops possibly due to an activation of the reaction mechanisms. Especially, the deposition activates and consumes monomers initially involved in diffusion. 
\begin{figure}[htb]\hspace{9cm}
\includegraphics[bb= 330 230 250 600, scale=.9]{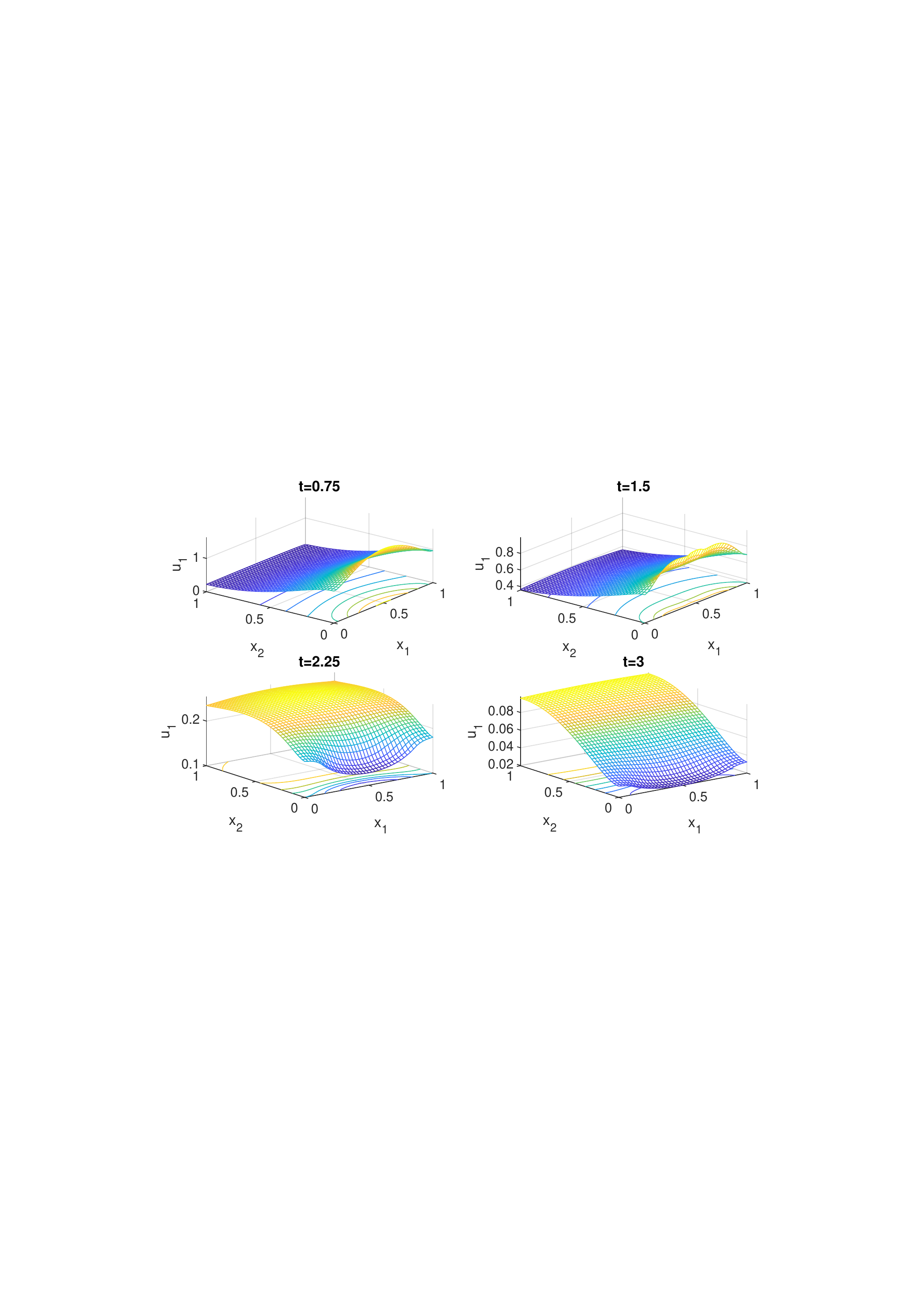}
\vspace{-2cm}
 \caption{Concentration profiles at different time steps for  the species $u_1$.}
 \label{Figu1}
\end{figure} 

In Figure \ref{Figu2}, we present similar graph for the concentration of  $u_2$.
As expected, the behaviour is similar as for the species $u_1$. Moreover, for the third species $u_3$ during the simulation we notice no difference in its qualitative behaviour.
\begin{figure}[htb]\hspace{9cm}
\includegraphics[bb= 330 230 250 600, scale=.9]{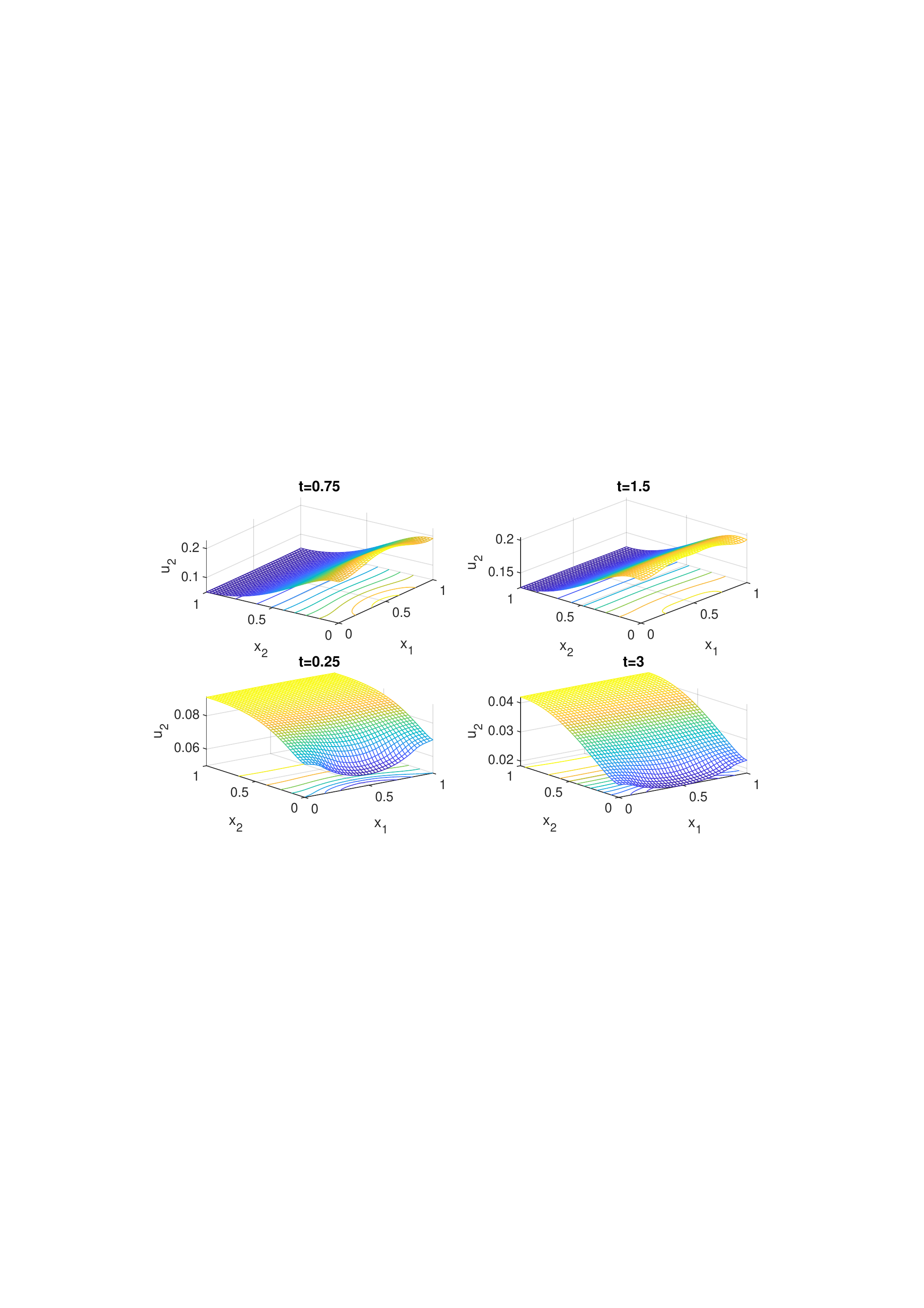}
\vspace{-2cm}
 \caption{Concentration profiles at different time steps for the species $u_2$.}
 \label{Figu2}
\end{figure} 

Regarding the behaviour of the immobile species $v$ pointed out in Figure \ref{Figv}, we observe an initial distribution in the first two frames 
$t=0.5,\, t=1.5$,  following the form of the mobile species $u_i$ and an increase inside the domain $\Omega$. After the inflow stops, for instance, see the last two frames  $t=1,75,\, t=3$,  the distribution of the mass of the deposited species appears to be stationary.
\begin{figure}[htb]\hspace{9cm}
\includegraphics[bb= 330 230 250 600, scale=.9]{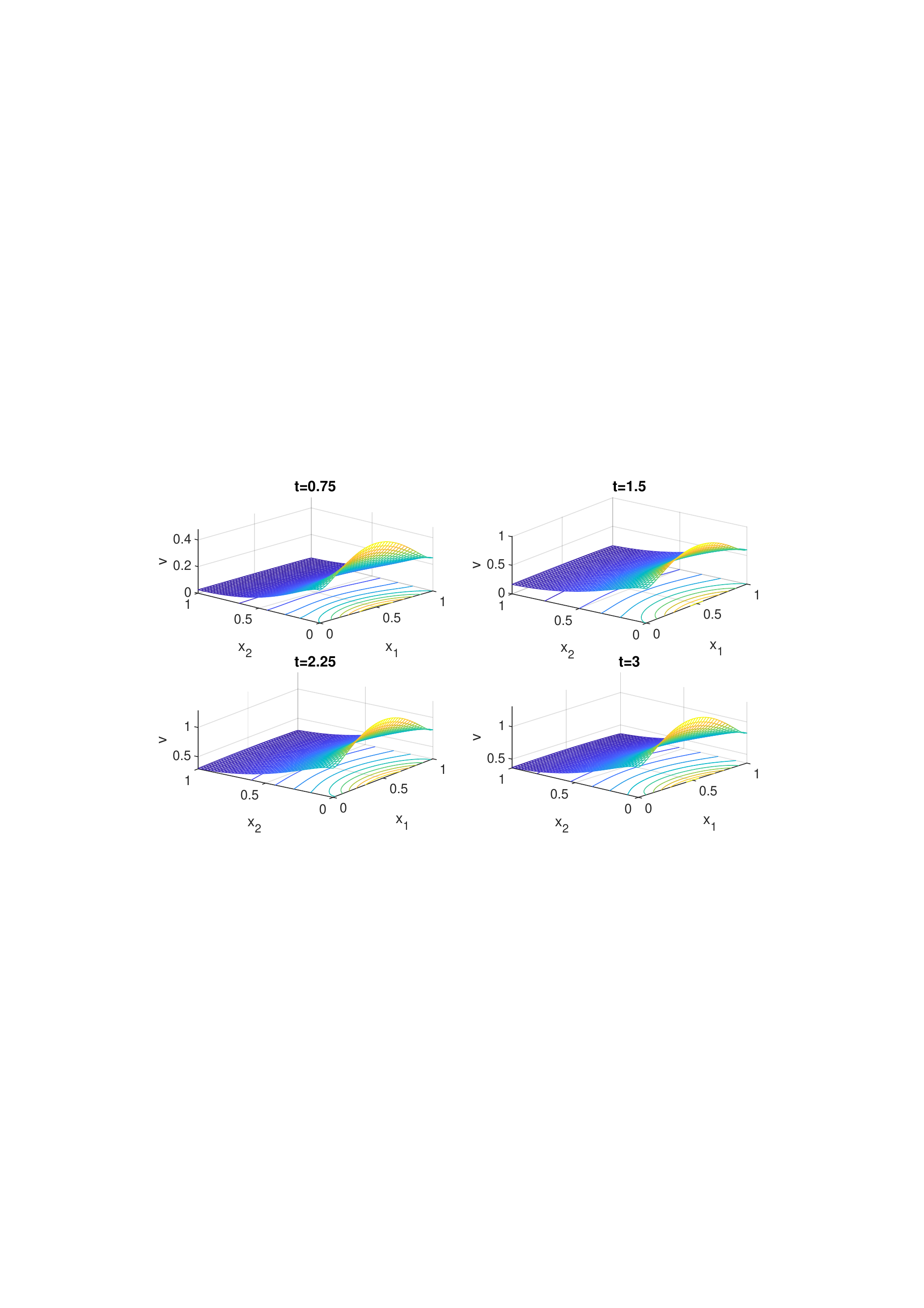}
\vspace{-2cm}
 \caption{Mass at different time steps for the deposited species $v$.}
 \label{Figv}
\end{figure} 

Focusing now in the behaviour of $r$,  we present   in  Figure \ref{Figrf}  time frames of contour plots of the 
radius at times $t_i=0.75,\,,1,5\,,2,25\,,3$. We observe the expected increase of the radius with respect to time. Even for $t>t_0=2$, after the inflow has stopped to happen,  we still have a slight increase  of the radius due to the accumulation of the immobile species around the spherical cores of the cells. 

\begin{figure}[htb]\hspace{9cm}
\includegraphics[bb= 330 230 250 600, scale=.9]{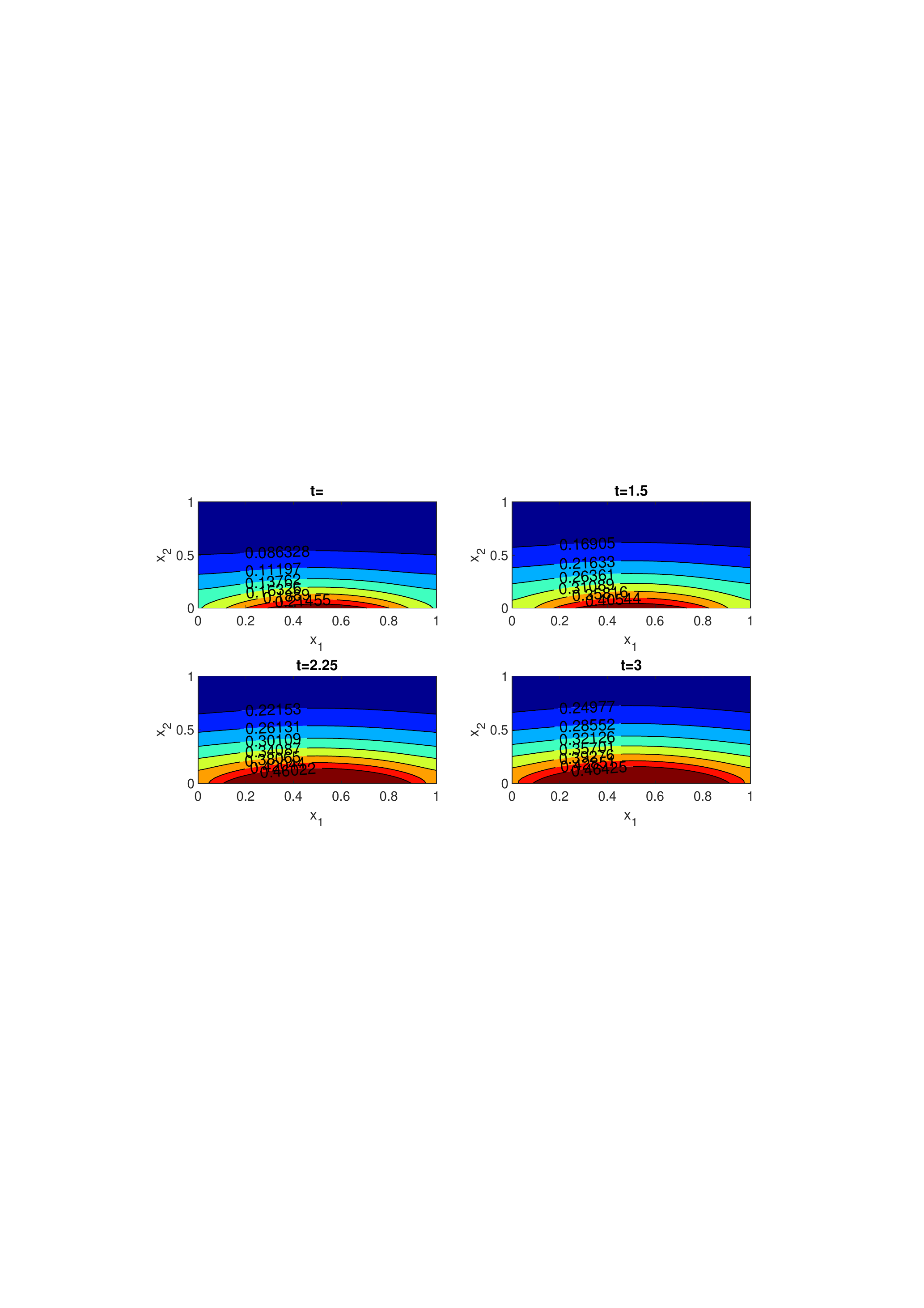}
  \vspace{-2cm}
 \caption{Contour plots of the radius $r=r(x_1,x_2,t_i)$ for the time steps $t_i=0.75,\,,1,5\,,2,25\,,3$. }
 \label{Figrf}
\end{figure} 

As final remarks regarding this numerical experiment,   the main observables $u_1$, $u_2$, $u_3$, and $v$ are plotted in Figure \ref{Figuiv_p}  against time for fixed locations  inside  the domain $\Omega$; see specifically the points $(0,0.5)$, the center $(0.5,0.5)$,  $(0.5,1)$ and at the corner $(0,0)$. 
%
\begin{figure}[htb]\hspace{9cm}
\includegraphics[bb= 330 230 250 600, scale=.9]{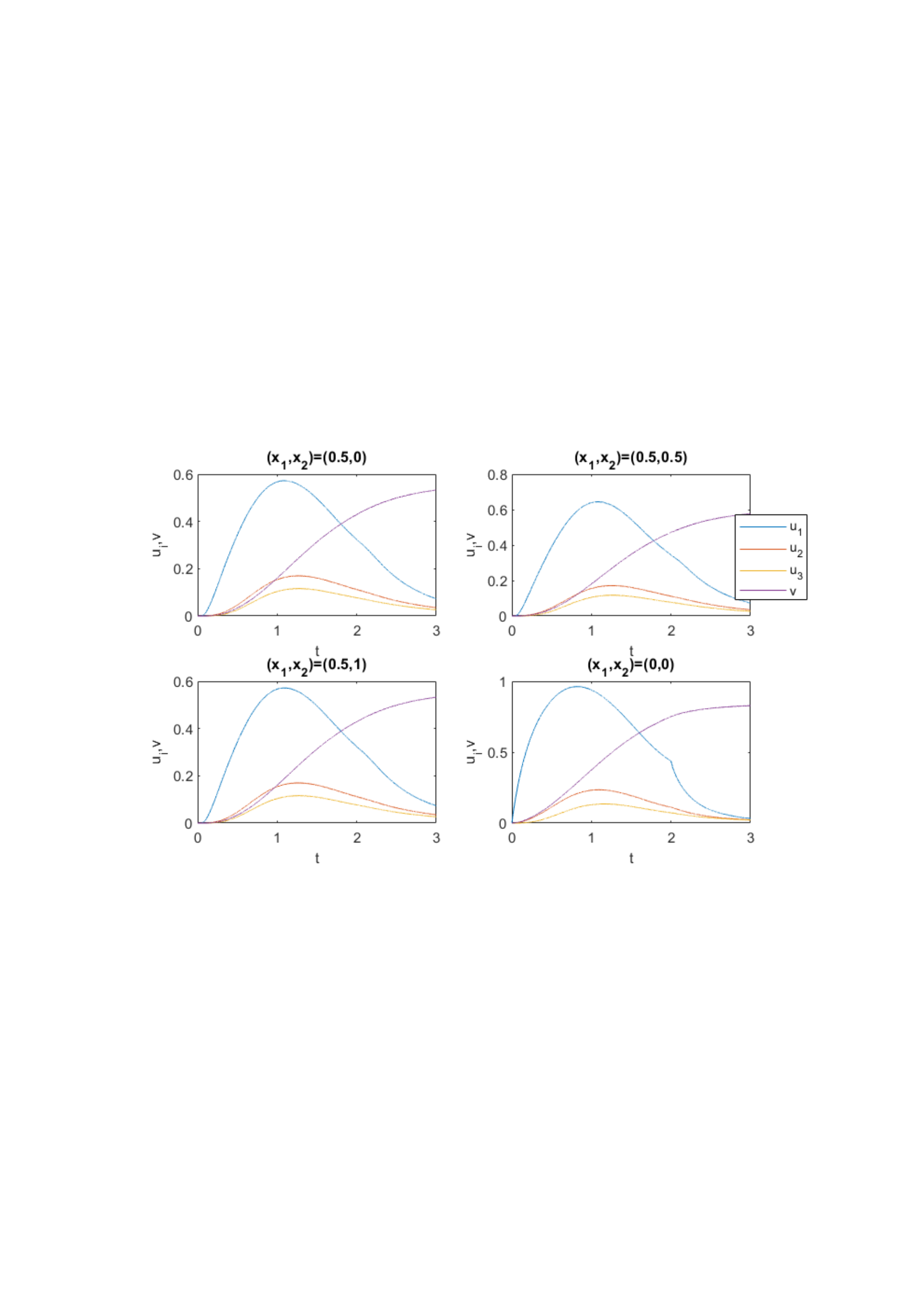}
 \caption{Concentration profiles of the species $u_i,\,v$ versus time at different spatial points in the square domain.}
 \label{Figuiv_p}
\end{figure} 

\paragraph{\bf Approximations with non uniform initial radius.}
In the following experiment we consider for the same scenario of initial and boundary conditions, \eqref{mod1b},  \eqref{mod1b2}, \eqref{mod1c},
a non uniform distribution for the initial values of the radius $r_0=r(x_1,x_2,0)$. Specifically,  we consider 
larger values of the radius in the form of two peaks centered at the points $(0.2,\,0.2)$ and $(0.8,\,0.8)$
and with $r_a$ having the form
\begin{eqnarray*}
r_a= r_c +r_1 \exp\left[-c (x_1-.2)^2-c(x_2-.2)^2\right]
+r_1 \exp\left[-c (x_1-.8)^2-c(x_2-.8)^2\right].
\end{eqnarray*}

In this context, we take  $r_c=0.05$, $r_1=0.35$, $c=60$ so that the maximum radius at these two points is quite large but smaller  than one ($\max r(x_1,x_2,0)\simeq 0.42 $) as it can be seen in the yellow area shown in Figure \ref{Figex2r0}.
Here we also set $M=41$ for the spatial partition and $\mathrm{R}=0.25$  The rest of the parameters values   are the same as in the previous numerical experiment.

The effect of the non-uniform  initial radius distribution is apparent in the evolution of the species of the model; particularly, this  non-uniformity effect can be traced back in the evolution of the population $u_1$ as exhibited in  Figure \ref{Figex2u1}. 

Due to the inflow from the edge $x_2=0$, we have now high values in the $u_1$ concentration around this edge (yellow area) of the domain, while inside the domain we have lower value (blue areas); this behavior can be seen in the first two frames of the simulation ($t=0.75,\,t=1.5$). We notice a gradual increasing  perturbation of the symmetric form of $u_1$ around the point $(0.2,\,0.2)$ due to the fact that, precisely at this point,  we have large values of $r$. In the next frames,  
at $(t=2.25,\, t=3)$ and particularly at $t=2.25$, we observe the concentration of $u_1$ after the time that the inflow in the domain has stopped ($t>t_0$ and $\frac{\partial u_i}{\partial n}(x_1,0,t)+ b_r u_i (x_1,0,t)=0$). The dominant  mechanisms now are the diffusion and the surface reaction,  i.e. the deposition of material around the cores of the cells. Thus we observe lower values of $u_1$
(blue and green areas) around the points with larger $r$ (close to the two initial peaks of $r$) where there the material has been deposited and higher  values (yellow areas) in  between the aforementioned peak points where the values of $r$
are smaller and deposition is slower. Essentially due to the same mechanism, at the final frame $t=3$ at the end of the simulation,  the values of $u_1$ decrease and  tend to zero with slower speed within the area close to the corner $(0,1)$. 
\begin{figure}[htb]\hspace{9cm}
\includegraphics[bb= 330 230 250 600, scale=.9]{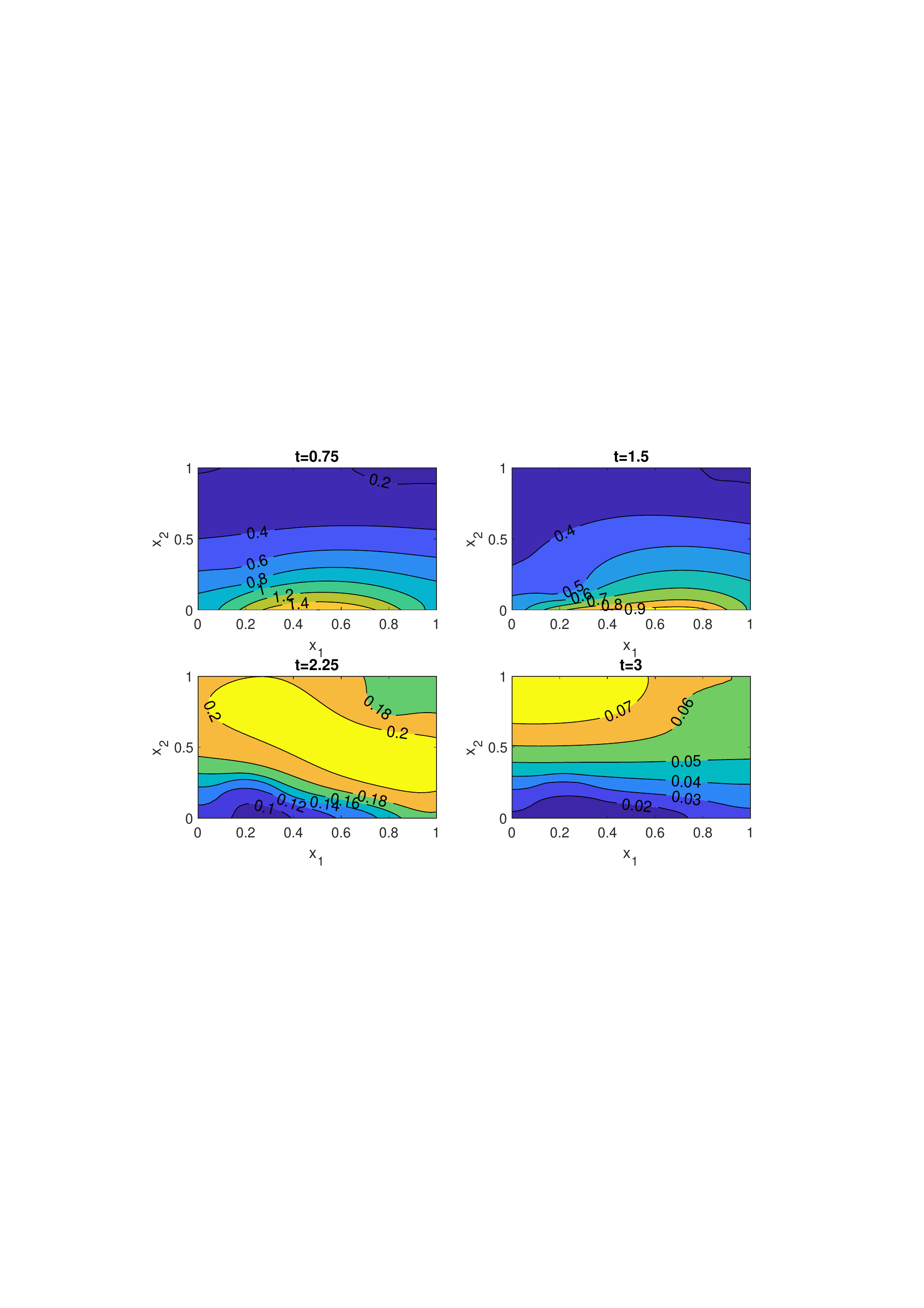}
\vspace{-2cm}
 \caption{Contour plots  at different time steps for the concentration of the species $u_1$ for the case of nonuniform initial radius distribution.}
 \label{Figex2u1}
\end{figure} 

In Figure \ref{Figex2r0}, we present the contour plot of the initial value of $r$ for this experiment. 
\begin{figure}[htb]\hspace{9cm}
\includegraphics[bb= 330 230 250 600, scale=.9]{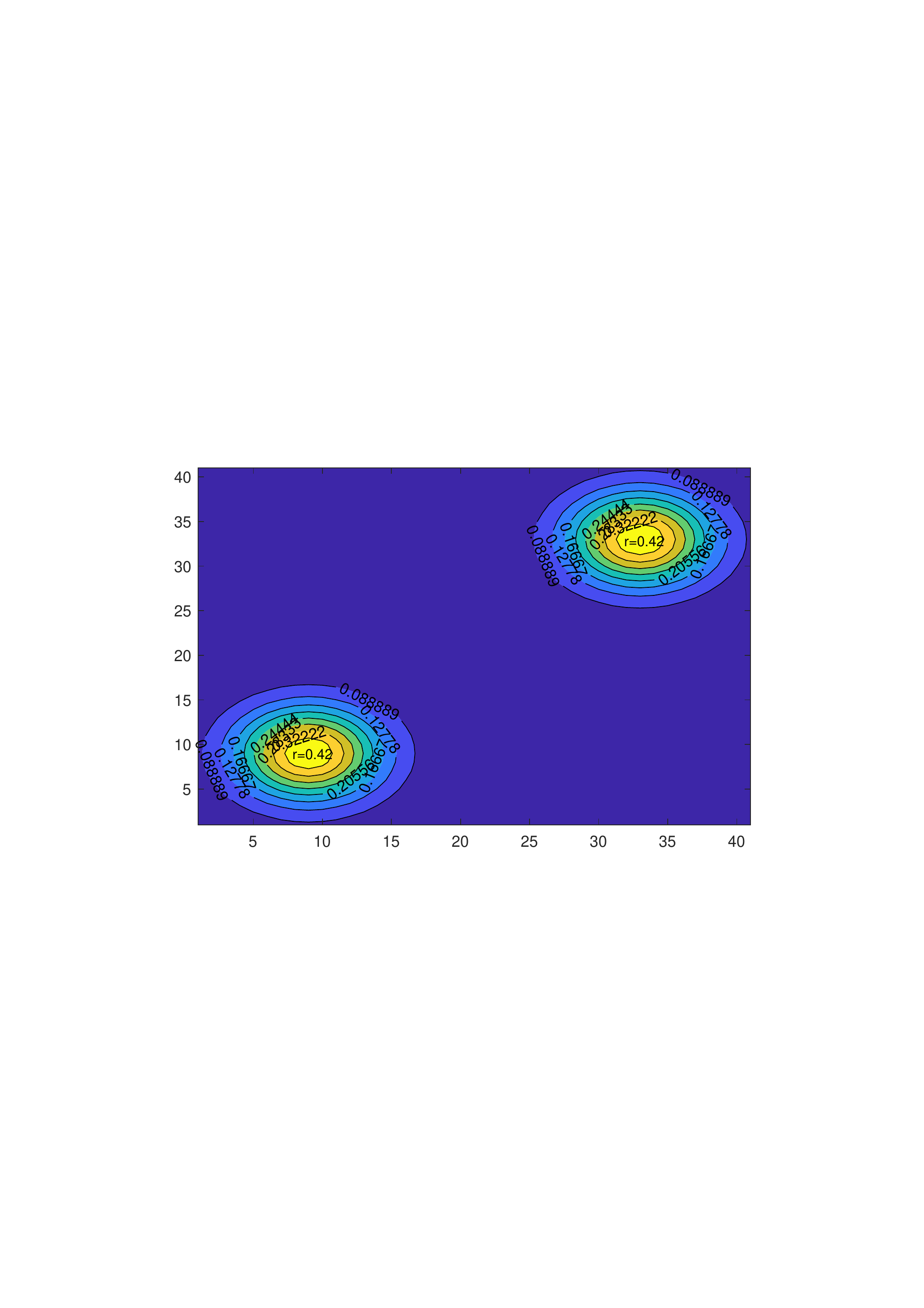}
\vspace{-2cm}
 \caption{ Contour plot   steps for  initial radius distribution $r_a=r(x_1,x_2,0)$.}
 \label{Figex2r0}
\end{figure} 
In Figure \ref{Figex2rT},  we point out  the spatial  distribution of the radius $r=r(x_1,x_2,T)$, where $T$ is the  final time of the simulation. In this case, we observe a behaviour consistent with what happens with the profile of the colloidal population $u_1$ towards the end of the simulation, i.e. around $t=3$. This effect is shown in Figure \ref{Figex2u1}. 

Higher values of $r$ equal  to $0.5$, where clogging occurs,  are taken in the lower part of the domain near the edge $x_2=0$ as well as in the neighbour of the points $(0.2,\,0.2)$ and $(0.8,\,0.8)$; 
observe the yellow areas in Figure \ref{Figex2rT}.
In the rest of the domain $\Omega$  the radius $r$ attains lower values.  This is in line  with the observed behaviour of the concentration profiles of $u_1$ around the end of the simulation. 

\begin{figure}[htb]\hspace{9cm}
\includegraphics[bb= 330 230 250 600, scale=.9]{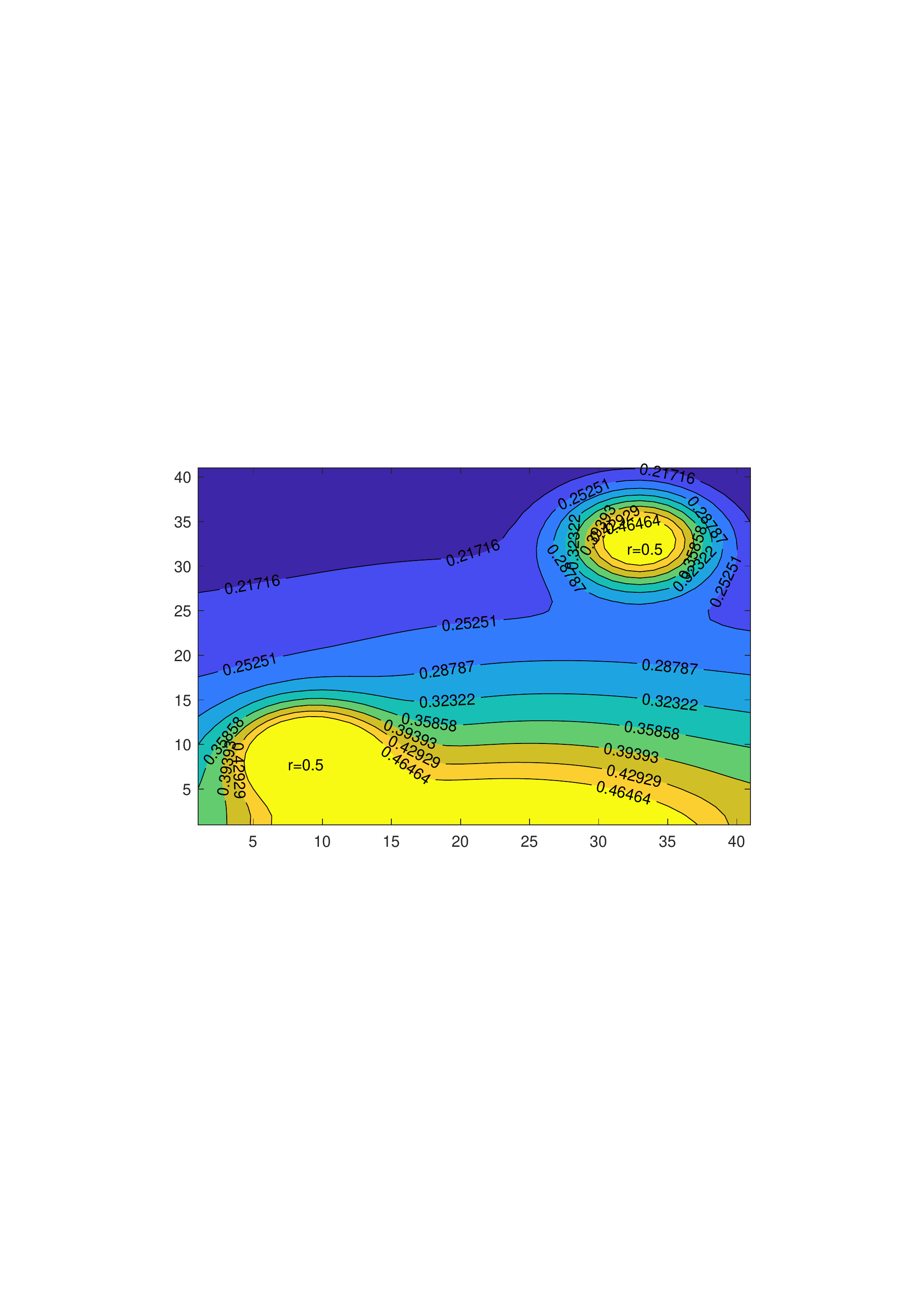}
\vspace{-2cm}
 \caption{Contour plot   for  the radius distribution $r_a=r(x_1,x_2,T)$ at the end of the simulation.}
 \label{Figex2rT}
\end{figure} 
The evolution of the diffusivity during the experiment is also apparent in Figure \ref{Figex2D}. We notice initially low values of it in the areas (blue regions) around the two peaks and higher values in the intermediate area (yellow region), in the first frame for $t=0.75$. As $r$ gradually increases the corresponding areas with low diffusivity expand as we can see in the second and third frame for $t=1.5,\, 2.25$, and finally, also for $t=3$ at the end of the simulation where we obtain the final 
map of the diffusivity. This  contains also information on the tortuosity of the material.  The latter frame  is in fact a "reverse" image of Figure
 \ref{Figex2rT} as very low values of $D$ are linked to  clogging around the blue areas where $r$ is large.
\begin{figure}[htb]\hspace{9cm}
\includegraphics[bb= 330 230 250 600, scale=.9]{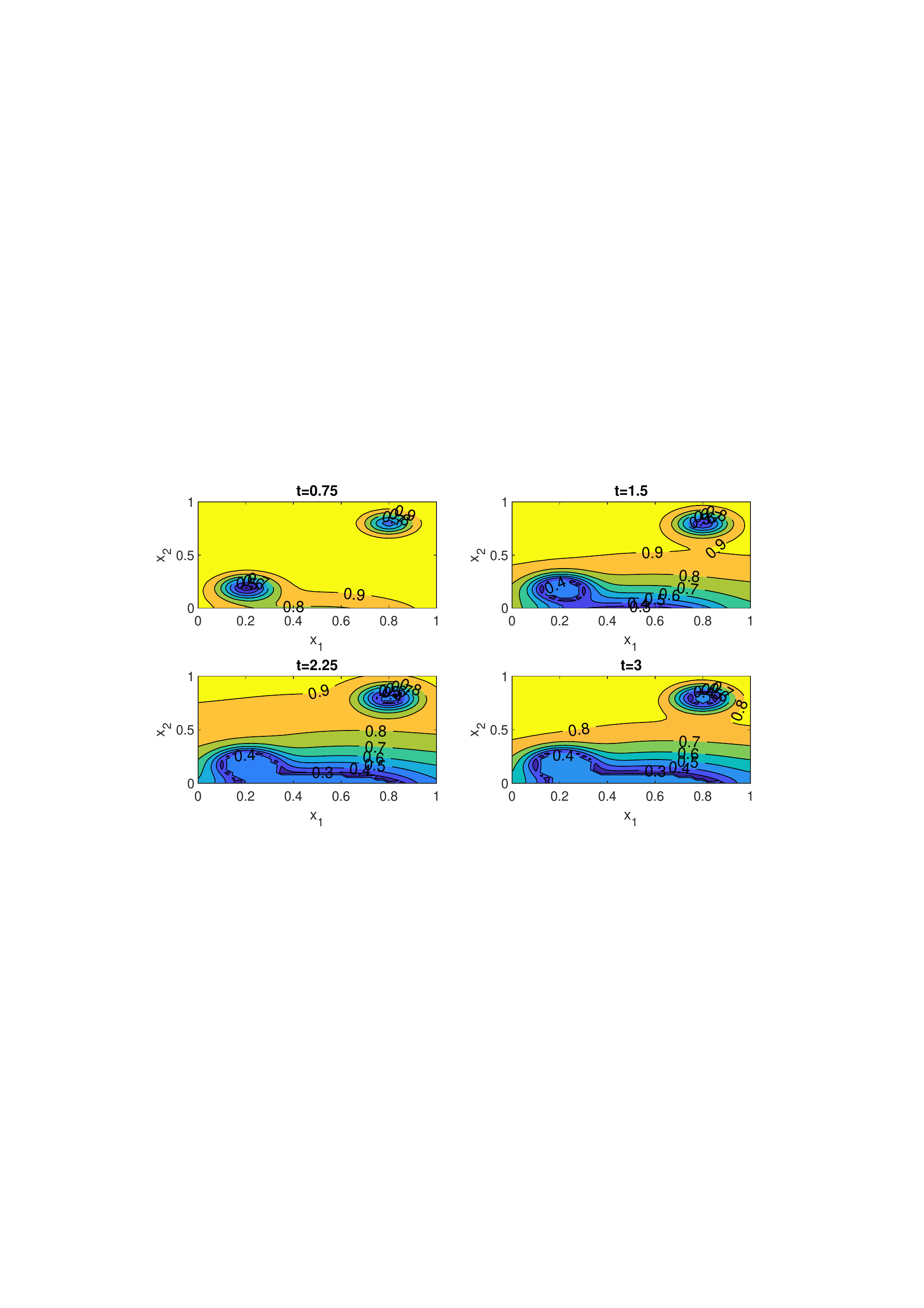}
\vspace{-2cm}
 \caption{Contour plots  at different time steps for the effective diffusivity $D(x,t)$ for the case of nonuniform initial radius distribution.}
 \label{Figex2D}
\end{figure} 

It is worthwhile to note that the spatial distribution of the balls-like microstructure  that corresponds to the vizualization shown in Figure
 \ref{Figex2rT} of the effective transport coefficient is pointed out in Figure \ref{Rdunit}. The unavoidable occurrence of clogging is pointed out in all these representations.




\section{Discussion}\label{discussion}
We have proven the existence of a weak solution to a specific coupled multiscale quasilinear system describing the diffusion, aggregation, fragmentation, and deposition of populations of colloidal particles in porous media. The structure of the system was originally derived in 
\cite{MC20} and we kept it here.

Tracking numerically the $x$-dependence in the shape of the microstructures rises serious computational problems especially in 3D or even in 2D when working with low-regular shapes. Because of the strong separation between the macroscopic length scale and the microscopic length scale, such setting is parallelizable; see \cite{Omar} for a prestudy in this direction done for a micro-macro reaction-diffusion problem with $x$-dependent microstructure arising in the context of transport of  nutrients in plants. The approach used in \cite{Omar} is potentially applicable here as well. Moreover, what concerns the discretization techniques used in this framework, a more advanced finite difference scheme, such as an appropriate version of Du Fort Frankel scheme, can give in principle more flexibility and accuracy in the numerical computations, e.g. by allowing larger time steps.  

Our multiscale model can allow for further relevant extensions in at least twofold direction: 

(1) For instance, a particularly interesting development would be to allow for some amount of stochasticity in the balance laws. In this spirit, the ODE for the growth  of the balls induced by the deposition of the species $v$  could have not only a random distribution of initial positions\footnote{This is tractable with the current form of the model.} but also  some suitably scaled "Brownian noise" in the production term mimicking an additional contribution eventually due to a non-uniform deposition of colloids on the boundary of the microstructures (compare with the setting from \cite{Maris}). The difficulty in this case is that, due to the strong coupling in the system, the overall problem becomes a quasilinear SPDE, which is much more difficult to handle mathematically and from the simulation point of view compared with our current purely deterministic setting. 

(2) Another development that would be interesting to follow in the deterministic setup is to attempt a computational efficient hybrid-type modeling. In this context, one idea  would be to couple continuum  population models for colloidal dynamics with discrete network models describing the mechanics of the underlying material (see e.g. the approach proposed in \cite{Axel} having paper as target material). Relevant questions would be: What is the counterpart of our equation for the radius growth of a ball $B(r)$, when the ball is replaced by a point? How does "continuum" deposition take place on "discrete" fixed locations?  Are points able to absorb matter in $2D$ and $3D$?  

We expect that the non-standard type of couplings suggested in (1) and (2) (i.e. deterministic-stochastic and continuum-discrete) can potentially be posed in terms of  measured-valued balance equations. We will investigate some of these ideas in follow-up works.
\clearpage
 
\section*{Acknowledgments}
AM is partially supported by the grant VR 2018-03648 "{\em Homogenization and dimension reduction of thin heterogeneous layers}". We thank R. E. Showalter (Oregon) and O. Richardson (Karlstad) for useful discussions on closely related topics.

\bibliography{colloids_idea}
\bibliographystyle{abbrv}

\end{document}